\theoremstyle{thrm}
\theoremstyle{plain}
\newtheorem{no}{Notation}[section]
\newtheorem{thm}{Theorem}[section]
\newtheorem{lemma}[thm]{Lemma}
\newtheorem{prop}[thm]{Proposition}
\newtheorem{cor}[thm]{Corollary}
\newtheorem{defn}[thm]{Definition}
\newtheorem{example}[thm]{Example}
\newtheorem*{ack}{Acknowledgement}
\theoremstyle{definition}
\newtheorem{remark}[equation]{Remark}
\newcommand{\B}{\operatorname{B} }
\newcommand{\C}{\operatorname{C} }
\newcommand{\Ho}{\operatorname{H} }
\newcommand{\Z}{\operatorname{Z} }
\newcommand{\im}{\operatorname{im} }
\newcommand{\Fix}{\operatorname{Fix}}
\newcommand{\Id}{\operatorname{id}}
\newcommand{\Aut}{\operatorname{Aut} }
\newcommand{\Ext}{\operatorname{Ext} }
\newcommand{\Map}{\operatorname{Map} }
\newcommand{\Ker}{\operatorname{ker} }
\newcommand{\IM}{\operatorname{im} }
\numberwithin{equation}{section}
\begin{document}
	
	\title{A Wells-like exact sequence for abelian extensions of relative Rota--Baxter groups}
	
	\author{Pragya Belwal}
	\address{Department of Mathematical Sciences, Indian Institute of Science Education and Research (IISER) Mohali, Sector 81, SAS Nagar, P O Manauli, Punjab 140306, India} 
	\email{pragyabelwal.math@gmail.com}
	
	\author{Nishant Rathee}
	\address{Department of Mathematical Sciences, Indian Institute of Science Education and Research (IISER) Mohali, Sector 81, SAS Nagar, P O Manauli, Punjab 140306, India} 
	\email{nishantrathee@iisermohali.ac.in, monurathee2@gmail.com}

	\subjclass[2010]{17B38, 16T25, 81R50}
	\keywords{Cohomology; extensions; relative Rota--Baxter groups; Wells exact sequence; automorphism; inducibility problem; Yang-Baxter equation}

	\begin{abstract}
Relative Rota--Baxter groups, a generalization of Rota--Baxter groups, are closely connected to skew left braces, which play a fundamental role in understanding non-degenerate set-theoretical  solutions to the Yang-Baxter equation. In this paper, we explore the inducibility problem for automorphisms of abelian extensions of relative Rota--Baxter groups. This problem is intricately linked to the recently introduced second cohomology of relative Rota--Baxter groups. Specifically, we prove a Wells-like exact sequence for abelian extensions of relative Rota--Baxter groups. The sequence establishes a connection among the group of derivations, certain automorphism group, and the second cohomology of relative Rota--Baxter groups, thereby giving precise structural relationships between these groups.
		
	\end{abstract}	
	
	\maketitle
	
	\section{Introduction}

Relative Rota--Baxter operators, also referred to as $\mathcal{O}$-operators, find their roots in the operator formulation of the classical Yang–Baxter equation, as introduced by Bai in \cite{CB13}. Jiang, Sheng, and Zhu recently extended the concept to Lie groups in \cite{JYC22}, presenting a generalization of Rota--Baxter operators with weight 1, initially defined in \cite{LHY2021}. The central theme of this development lies in the ability to further differentiate these operators, leading to the emergence of relative Rota--Baxter operators on corresponding Lie algebras. The relationship between relative Rota--Baxter groups and other algebraic structures, as well as their role in solving the set-theoretical Yang-Baxter equation, is explored in \cite{BGST}.

Bardakov and Gubarev explored Rota--Baxter operators on (abstract) groups in \cite{VV2022, VV2023}, demonstrating that a Rota--Baxter operator on a group gives rise to a skew left brace structure on that group. The investigation of Rota--Baxter groups within the context of skew left braces is further carried out in \cite{AS22}, where a non-trivial example of a skew left brace that cannot be derived from Rota--Baxter operators on groups has been given.  The ideas introduced in \cite{VV2022} are further employed in \cite{NM1} to observe that the category of relative Rota--Baxter groups serves as a generalization of the category of skew left braces, since the category of bijective relative Rota--Baxter groups is isomorphic to the category of skew left braces.

The Wells exact sequence, originally introduced by Wells in \cite{W71}, serves as a crucial tool for addressing the inducibility problem for automorphisms associated with group extensions. The sequence has undergone adaptation and generalisations for various algebraic structures, enabling the exploration of automorphisms of extensions specific to those categories. In \cite{BS2017, HH20}, the inducibility problem has been explored for Lie algebra and Lie superalgebra extensions, leading to the derivation of a Wells-like  exact sequence customized for these extensions. Additionally, a parallel examination of the inducibility problem for extensions of Lie-Yamaguti algebras is presented in \cite{GMM23}. Analogous sequences for set-theoretical solutions of  Yang-Baxter equation are also developed. For example, a Wells-like  exact sequence for skew braces is detailed in \cite{NMY1, N1}, and a Wells-like exact sequence for the cohomology theory of linear cycle sets, developed in \cite{LV16}, is defined in \cite{BS2023}. A similar  exact sequence for quandle extensions has been developed in \cite{BS2021}.   The inducibility and lifting problems concerning the extension of Rota--Baxter and relative Rota--Baxter operators across various algebraic structures have been investigated by numerous authors, see \cite{MDH23, AN2, AN1}. 

In this paper, we delve into the inducibility problem concerning automorphisms of abelian extensions of relative Rota--Baxter groups. The organization of the paper is as follows: in Section \ref{Preliminaries}, we present the preliminaries of relative Rota--Baxter groups, which are essential for this work. We revisit some foundational facts about extensions of relative Rota--Baxter groups in Section \ref{Extensions and second cohomology of relative Rota--Baxter groups}, focusing on the relationship between the second cohomology group and extensions of relative Rota--Baxter groups. In Section \ref{Wells-like exact sequence for relative Rota--Baxter groups}, we introduce some automorphism groups for extensions of relative Rota--Baxter groups, and define an action of these groups on extensions. We prove that the automorphism group of an extension of relative Rota--Baxter groups contains those automorphisms that induce identity on the substructure, and the quotient is isomorphic to the set of derivations (Theorem \ref{wells6 RRB}). This result leads to a Wells-like exact sequence for extensions of relative Rota--Baxter groups (Theorem \ref{wells7 RRB}). In the end, we provide a module-theoretic criterion for an automorphism of a relative Rota-Baxter group to be inducible (Theorem \ref{moduletheory}).

\medskip

\section{Preliminaries}\label{Preliminaries}

In this section, we revisit some basic concepts on relative Rota--Baxter groups essential for our study. For an in-depth understanding, we direct the reader to \cite{JYC22, NM1}. Our terminology aligns with that of \cite{BRS2023, NM1}, which is distinct from other sources.

\begin{defn}
	A relative Rota--Baxter group is a quadruple $(H, G, \phi, R)$, where $H$ and $G$ are groups, $\phi: G \rightarrow \Aut(H)$ a group homomorphism (where $\phi(g)$ is denoted by $\phi_g$) and $R: H \rightarrow G$ is a map satisfying the condition $$R(h_1) R(h_2)=R(h_1 \phi_{R(h_1)}(h_2))$$ for all $h_1, h_2 \in H$. 
	\par
	\noindent The map $R$ is referred to as the relative Rota--Baxter operator on $H$.
\end{defn}

We say that the relative Rota--Baxter group  $(H, G, \phi, R)$ is {\it trivial} if $\phi:G \to \Aut(H)$ is the trivial homomorphism. Further, it is said to be {\it bijective} if the Rota--Baxter operator $R$ is a bijection. We refer the reader to \cite{BRS2023, JYC22} for examples.

\begin{remark}
	Note that if $(H, G, \phi, R)$ is  a trivial relative Rota--Baxter group, then $R:H \rightarrow G$ is a group homomorphism. 
\end{remark}

Let  $(H, G, \phi, R)$ be a relative Rota--Baxter group, and let $K \leq H$ and $L \leq G$ be subgroups.
\begin{enumerate}
	\item If  $\phi_\ell(K) \subseteq K$ for all $\ell \in L$, then we denote the restriction of $\phi$ by $\phi|: L \to \Aut(K)$. 
	\item If $R(K) \subseteq L$, then we denote the restriction of $R$ by $R|: K \to L$.
\end{enumerate}

\begin{defn}
	Let $(H,G,\phi,R)$ be a relative Rota--Baxter group, and $K\leq H$ and $L\leq G$ be subgroups. Suppose that  $\phi_\ell(K) \subseteq K$ for all $\ell \in L$ and $R(K) \subseteq L$. Then $(K,L,\phi |,R |)$ is a relative Rota--Baxter group, which we refer as a relative Rota--Baxter subgroup of $(H,G,\phi,R)$ and write $(K,L,\phi |,R |)\leq(H,G,\phi,R)$.
\end{defn}

\begin{defn}\label{defn ideal rbb-datum}
	Let $(H, G, \phi, R)$ be a relative Rota--Baxter group and  $(K, L,  \phi|, R|) \leq (H, G, \phi, R)$ its relative Rota--Baxter subgroup. We say that $(K, L,  \phi|, R|)$ is an ideal of $(H, G, \phi, R)$ if 
	\begin{align}
		& K \trianglelefteq H \quad \mbox{and} \quad L \trianglelefteq G, \label{I0}\\
		& \phi_g(K) \subseteq K  \mbox{ for all } g \in G, \label{I1} \\
		& \phi_\ell(h) h^{-1} \in K \mbox{ for all } h \in H \mbox{ and }  \ell \in L. \label{I2}
	\end{align}
	We write $(K, L, \phi|, R|) \trianglelefteq (H, G, \phi, R)$ to denote an ideal of a relative Rota--Baxter group. 
\end{defn}

The preceding definitions lead to the following result \cite[Theorem 5.3]{NM1}.

\begin{thm}\label{subs}
	Let $(H, G, \phi, R)$ be a relative Rota--Baxter group and $(K, L,  \phi|, R|)$ an ideal of $(H, G, \phi, R)$. Then there are maps $\overline{\phi}: G/L \to \Aut(H/K)$ and  $\overline{R}: H/K \to G/L$ defined by
	$$ \overline{\phi}_{\overline{g}}(\overline{h})=\overline{\phi_{g}(h)} \quad \textrm{and} \quad	\overline{R}(\overline{h})=\overline{R(h)}$$
	for $\overline{g} \in G/L$ and $\overline{h} \in H/K$, such that  $(H/K, G/L, \overline{\phi}, \overline{R})$ is a relative Rota--Baxter group.
\end{thm}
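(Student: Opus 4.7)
The plan is to verify, in order, that $\overline{\phi}_{\overline{g}}$ is a well-defined endomap on $H/K$, that it is independent of the representative $g$, that it is an automorphism and $\overline{\phi}$ is a homomorphism, that $\overline{R}$ is well-defined on cosets, and finally that the Rota--Baxter identity carries over to the quotient. The structural observation underlying everything is that the two ideal conditions \eqref{I1} and \eqref{I2} are precisely what is needed to handle the two compatibilities, and the Rota--Baxter identity itself is what is needed to handle $\overline{R}$.

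First I would dispatch $\overline{\phi}$. Well-definedness in the $h$-variable is immediate from \eqref{I1}: if $h_1 h_2^{-1}\in K$, then $\phi_g(h_1)\phi_g(h_2)^{-1}=\phi_g(h_1 h_2^{-1})\in \phi_g(K)\subseteq K$. For independence of the coset of $g$, write $g_2=g_1\ell$ with $\ell\in L$ and compute
\[
\phi_{g_2}(h)=\phi_{g_1}\bigl(\phi_\ell(h)\bigr)=\phi_{g_1}\bigl((\phi_\ell(h)h^{-1})\,h\bigr)=\phi_{g_1}(k)\,\phi_{g_1}(h),
\]
where $k=\phi_\ell(h)h^{-1}\in K$ by \eqref{I2} and $\phi_{g_1}(k)\in K$ by \eqref{I1}. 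Thus $\phi_{g_2}(h)\equiv \phi_{g_1}(h)\bmod K$. That $\overline{\phi}_{\overline{g}}$ is an automorphism of $H/K$ follows because it is obviously a homomorphism with two-sided inverse $\overline{\phi}_{\overline{g^{-1}}}$; that $\overline{\phi}\colon G/L\to \Aut(H/K)$ is a group homomorphism is inherited directly from $\phi$.

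The main obstacle is well-definedness of $\overline{R}$, since it is not a homomorphism and cannot be handled by formal quotient arguments. The key trick is to use the Rota--Baxter identity to express $R(h_1 k)$ for $k\in K$ in terms of $R(h_1)$ modulo $L$. I first observe that \eqref{I1} together with the fact that $\phi_g$ is an automorphism of $H$ forces $\phi_g(K)=K$: indeed $K=\phi_g\phi_{g^{-1}}(K)\subseteq \phi_g(K)\subseteq K$. Hence, given $k\in K$, there exists $k'\in K$ with $\phi_{R(h_1)}(k')=k$, and then
\[
R(h_1 k)=R\bigl(h_1\,\phi_{R(h_1)}(k')\bigr)=R(h_1)\,R(k')\in R(h_1)\,L,
\]
using the Rota--Baxter relation and $R(K)\subseteq L$. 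Therefore $\overline{R}$ depends only on the coset $\overline{h}$.

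Finally, the Rota--Baxter identity for $(H/K,G/L,\overline{\phi},\overline{R})$ is a single calculation: using the identity in $(H,G,\phi,R)$,
\[
\overline{R}(\overline{h_1})\,\overline{R}(\overline{h_2})=\overline{R(h_1)R(h_2)}=\overline{R\bigl(h_1\phi_{R(h_1)}(h_2)\bigr)}=\overline{R}\bigl(\overline{h_1}\,\overline{\phi}_{\overline{R}(\overline{h_1})}(\overline{h_2})\bigr),
\]
which completes the verification. Of these steps, the well-definedness of $\overline{R}$ is the only nontrivial part; the rest are straightforward quotient manipulations, and the proof is essentially a choreography of \eqref{I0}--\eqref{I2} together with the defining axiom of a relative Rota--Baxter group.
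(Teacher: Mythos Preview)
Your proof is correct. The paper itself does not supply a proof of this theorem; it records the statement as \cite[Theorem~5.3]{NM1} and moves on, so there is nothing in the paper to compare your argument against. Your verification is the standard one: conditions \eqref{I1} and \eqref{I2} take care of well-definedness of $\overline{\phi}$, and your use of the surjectivity $\phi_g(K)=K$ together with the Rota--Baxter identity to write $R(h_1k)=R(h_1)R(k')$ is exactly the right device for well-definedness of $\overline{R}$.
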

\begin{no}
	We write $(H, G, \phi, R)/(K, L, \phi|, R|)$ to denote the quotient relative Rota--Baxter group $(H/K, G/L, \overline{\phi}, \overline{R})$.
\end{no}

\begin{defn}
	Let $(H, G, \phi, R)$ and $(K, L, \varphi, S)$ be two relative Rota--Baxter groups.
	\begin{enumerate}
		\item A homomorphism $(\psi, \eta): (H, G, \phi, R) \to (K, L, \varphi, S)$ of relative Rota--Baxter groups is a pair $(\psi, \eta)$, where $\psi: H \rightarrow K$ and $\eta: G \rightarrow L$ are group homomorphisms such that
		\begin{equation}\label{rbb datum morphism}
			\eta \; R = S \; \psi \quad \textrm{and} \quad \psi \; \phi_g =  \varphi_{\eta(g)}\psi
		\end{equation}
		for all $g \in G$.
		\item The kernel of a homomorphism $(\psi, \eta): (H, G, \phi, R) \to (K, L, \varphi, S)$ of relative Rota--Baxter groups is the quadruple $$(\Ker(\psi), \Ker(\eta), \phi|, R|),$$ where $\Ker(\psi)$ and $\Ker(\eta)$ denote the kernels of the group homomorphisms $\psi$ and $\eta$, respectively. The conditions in \eqref{rbb datum morphism} imply that the kernel is itself a relative Rota--Baxter group. In fact, the kernel turns out to be an ideal of $(H, G, \phi, R)$.
		
		\item The image of a homomorphism $(\psi, \eta):  (H, G, \phi, R) \to (K, L, \varphi, S)$ of relative Rota--Baxter groups is the quadruple 
		$$(\IM(\psi), \IM(\eta), \varphi|, S| ),$$ where $\IM(\psi)$ and $\IM(\eta)$ denote the images of the group homomorphisms $\psi$ and $\eta$, respectively. The image is itself a relative Rota--Baxter group.
		
		\item A homomorphism $(\psi, \eta)$ of relative Rota--Baxter groups is called an isomorphism if both $\psi$ and $\eta$ are group isomorphisms. Similarly, we say that $(\psi, \eta)$ is an embedding of a relative Rota--Baxter group if both $\psi$ and $\eta$ are embeddings of groups.
	\end{enumerate}
\end{defn}

\begin{defn}
	The center of a relative Rota--Baxter group $(H, G, \phi, R)$ is defined as
	$$\Z(H, G, \phi, R)= \big( \Z^{\phi}_R(H), \Ker(\phi), \phi|, R| \big),$$
	where $\Z^{\phi}_{R}(H)=\Z(H)  \cap \Ker(\phi \,R) \cap \Fix(\phi)$, $\Fix(\phi)= \{ x \in H  \mid \phi_g(x)=x \mbox{ for all } g \in G \}$ and  $R: H^{(\circ_R)} \rightarrow G$ is viewed as a group homomorphism.
\end{defn}
\medskip

\section{ Extensions and second cohomology of relative Rota--Baxter groups}\label{Extensions and second cohomology of relative Rota--Baxter groups}

\begin{defn}
Let $(K,L, \alpha,S )$ and $(A,B, \beta, T)$ be relative Rota--Baxter groups.  
\begin{enumerate}
	\item An extension of $(A,B, \beta, T)$ by $(K,L, \alpha,S )$ is a relative Rota--Baxter group  $(H,G, \phi, R)$ that fits into the sequence 
	$$\mathcal{E} : \quad  {\bf 1} \longrightarrow (K,L, \alpha,S ) \stackrel{(i_1, i_2)}{\longrightarrow}  (H,G, \phi, R) \stackrel{(\pi_1, \pi_2)}{\longrightarrow} (A,B, \beta, T) \longrightarrow {\bf 1},$$ 
	where  $(i_1, i_2)$ and $(\pi_1, \pi_2)$ are morphisms of relative Rota--Baxter groups such that $(i_1, i_2)$ is an embedding, $(\pi_1, \pi_2)$ is an epimorphism of  relative Rota--Baxter groups and $(\IM(i_1), \IM(i_2), \phi|, R|)= (\Ker(\pi_1), \Ker(\pi_2), \phi|, R|)$.
	\item[] To avoid complexity of notation, we assume that $i_1$ and $i_2$ are inclusion maps. This allows us to write that $\phi$ restricted to  $L$ is $\alpha$ and $R$ restricted to  $K$ is $S$.	
	\item We say that $\mathcal{E}$ is an abelian extension if $K$ and $L$ are abelian groups and the relative Rota--Baxter group $(K,L, \alpha,S )$ is trivial.
\end{enumerate}
\end{defn}
Here, ${\bf 1}$ denotes the trivial relative Rota--Baxter group for which both the underlying groups are trivial.

\begin{example}
	Let $\mathcal{K}=(K,L,\alpha,S) $ and $\mathcal{A}=(A,B,\beta,T)$ be two relative Rota--Baxter groups. Then the direct product of $\mathcal{K}$ and $\mathcal{A}$ is defined as $$\mathcal{A}\times\mathcal{K}=(A\times K, \, B\times L, \, \beta\times \alpha, \, T\times S),$$ where $A\times K$ and $B\times L$ are direct product of groups, and the action and the relative  Rota--Baxter operator are given by
	\begin{eqnarray*}
		(\beta\times\alpha)_{(b,l)}(a,k)&=&(\beta_b(a),\,\alpha_l(k)),\\
		(T\times S)(a,k)&=&(T(a),\,S(k)),
	\end{eqnarray*}
	for $a \in A$, $b \in B$, $k \in K$ and $l \in L$.
\end{example}

\begin{prop}\cite[Proposition 3.6]{BRS2023}
	An extension
	$$\mathcal{E} : \quad {\bf 1} \longrightarrow (K,L, \alpha,S ) \stackrel{(i_1, i_2)}{\longrightarrow}  (H,G, \phi, R) \stackrel{(\pi_1, \pi_2)}{\longrightarrow} (A,B, \beta, T) \longrightarrow {\bf 1}$$
	of relative Rota--Baxter groups	induces extensions of groups $$\mathcal{E}_1 : \quad  1 \longrightarrow K \stackrel{i_1}{\longrightarrow}  H \stackrel{\pi_1 }{\longrightarrow} A \longrightarrow 1 \quad \textrm{and} \quad \mathcal{E}_2 : \quad 1 \longrightarrow L \stackrel{i_2}{\longrightarrow}  G \stackrel{\pi_2 }{\longrightarrow} B \longrightarrow 1.$$
	Furthermore, $ (K,L, \alpha,S )$ is an ideal of $(H,G, \phi, R)$ and the quotient relative Rota--Baxter group $(H, G, \phi, R)/ (K,L, \alpha,S ) $ is isomorphic to $(A, B, \beta, T).$
\end{prop}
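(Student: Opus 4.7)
The plan is to decompose the proposition into three independent tasks: (i) exactness of the two group sequences $\mathcal{E}_1$ and $\mathcal{E}_2$, (ii) verification that $(K,L,\alpha,S)$ satisfies the three ideal axioms of Definition \ref{defn ideal rbb-datum} inside $(H,G,\phi,R)$, and (iii) identification of the quotient relative Rota--Baxter group with $(A,B,\beta,T)$. Part (i) is almost tautological: by definition, an embedding (resp.\ epimorphism) of relative Rota--Baxter groups requires each of the component maps $i_1,i_2$ (resp.\ $\pi_1,\pi_2$) to be injective (resp.\ surjective) group homomorphisms, and the equality $(\IM(i_1),\IM(i_2),\phi|,R|)=(\Ker(\pi_1),\Ker(\pi_2),\phi|,R|)$ is a componentwise statement. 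Reading off the first and second components therefore yields the two short exact sequences of groups.

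For part (ii), the normality clauses in \eqref{I0} are immediate from the exactness of $\mathcal{E}_1$ and $\mathcal{E}_2$. To establish \eqref{I1}, take $g \in G$ and $k \in K=\Ker(\pi_1)$; using the compatibility $\pi_1\,\phi_g = \beta_{\pi_2(g)}\,\pi_1$ from \eqref{rbb datum morphism}, one computes $\pi_1(\phi_g(k))=\beta_{\pi_2(g)}(1)=1$, so $\phi_g(k)\in K$. For \eqref{I2}, pick $\ell \in L=\Ker(\pi_2)$ and $h \in H$; applying $\pi_1$ and invoking $\pi_2(\ell)=1$ gives $\pi_1\bigl(\phi_\ell(h)h^{-1}\bigr)=\beta_{1}(\pi_1(h))\,\pi_1(h)^{-1}=1$, hence $\phi_\ell(h)h^{-1}\in K$. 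This shows $(K,L,\alpha,S)\trianglelefteq(H,G,\phi,R)$, after which Theorem \ref{subs} guarantees that the quotient $(H/K,G/L,\overline{\phi},\overline{R})$ is a well-defined relative Rota--Baxter group.

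For part (iii), the first isomorphism theorem applied to $\pi_1$ and $\pi_2$ produces group isomorphisms $\overline{\pi_1}\colon H/K \to A$ and $\overline{\pi_2}\colon G/L \to B$. It only remains to verify that $(\overline{\pi_1},\overline{\pi_2})$ is a morphism of relative Rota--Baxter groups, i.e.\ that $\overline{\pi_1}\circ \overline{\phi}_{\overline{g}}=\beta_{\overline{\pi_2}(\overline{g})}\circ \overline{\pi_1}$ and $\overline{\pi_2}\circ \overline{R}=T\circ \overline{\pi_1}$; evaluating on representatives, these reduce directly to the corresponding identities for $(\pi_1,\pi_2)$ in \eqref{rbb datum morphism}. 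The whole argument is diagrammatic, so I do not expect a genuine obstacle; the only point requiring minor care is \eqref{I2}, where one must use $\pi_2(\ell)=1$ rather than any condition on $\pi_1(\ell)$, which is not even defined, and choose to apply $\pi_1$ (not $\pi_2$) to the element $\phi_\ell(h)h^{-1}\in H$.
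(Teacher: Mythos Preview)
Your argument is correct and complete. Note, however, that the paper does not supply its own proof of this proposition: it is simply quoted from \cite[Proposition 3.6]{BRS2023}, so there is nothing in the present paper to compare your approach against. Your verification of the ideal axioms via the morphism identities \eqref{rbb datum morphism} and your use of the first isomorphism theorem componentwise are exactly the natural route, and the care you flag about applying $\pi_1$ (not $\pi_2$) when checking \eqref{I2} is well placed.
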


\begin{remark}
Let $(H, G, \phi, R)$ be a relative Rota--Baxter group, and $(K, L, \phi|, R|)$ be its ideal. The relative Rota--Baxter group $(H, G, \phi, R)$ is an extension of $(K, L, \phi|, R|)$ by the quotient relative Rota--Baxter group $(H, G, \phi, R) / (K, L, \phi|, R|)$. In particular, every relative Rota--Baxter group is an extension of its center.
\end{remark}

Next, we recall some necessary results on abelian extensions of relative Rota--Baxter groups \cite[p.8]{BRS2023}. Throughout our discussion, we denote by $\mathcal{E}$ an abelian extension defined as follows
$$ {\bf 1} \longrightarrow (K,L, \alpha,S ) \stackrel{(i_1, i_2)}{\longrightarrow}  (H,G, \phi, R) \stackrel{(\pi_1, \pi_2)}{\longrightarrow} (A,B, \beta, T) \longrightarrow {\bf 1},$$ 
and $(s_H, s_G)$ denote the  set-theoretic section to the extension $\mathcal{E}$. 
\begin{prop}\label{f rho chi eqn}\cite[p.11]{BRS2023}
	Let  $a \in A$, $b \in B$, $k \in K$, and $l \in L$. Then, the following statements hold:
	
	\begin{enumerate}
		\item The action $\phi$ is characterized by the equation
		\begin{equation}\label{feqn}
			\phi_{s_G(b)l}(s_H(a)k) = s_H(\beta{_b(a)}) \, \rho(a,b) \, \phi_{s_G(b)}(f(l,a)k),
		\end{equation}
		where $f: L \times A \rightarrow K$ is defined as $f(l,a) := s_H(a)^{-1} \phi_l(s_H(a))$, and $\rho: A \times B \rightarrow K$ is given by $\rho(a,b) := (s_H(\beta_b(a)))^{-1}\phi_{s_G(b)}(s_H(a))$.
		
		\item The relative Rota--Baxter operator $R$ is expressed as
		\begin{equation}\label{relativecon}
			R(s_H(a)k) = s_G(T(a)) \, \chi(a) \, S(\phi^{-1}_{s_G(T(a))}(k)),
		\end{equation}
		where $\chi: A \rightarrow K$ is defined by,  $\chi(a):=s_G(T(a))^{-1}R(s_H(a))$.
	\end{enumerate}
\end{prop}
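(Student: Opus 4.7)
The plan is to derive both equations by direct computation, unfolding the definitions of $f$, $\rho$, and $\chi$ and exploiting the abelian hypothesis alongside the homomorphism property of $\phi$ and the defining identity of a relative Rota--Baxter operator. I would begin by recording two facts that will be used repeatedly: since $\mathcal{E}$ is an abelian extension, $\phi|_L$ coincides with $\alpha$, which is trivial, so $\phi_l(k) = k$ for all $l \in L$ and $k \in K$; and since $(K,L,\alpha,S)$ is an ideal of $(H,G,\phi,R)$, one has $\phi_g(K) \subseteq K$ for every $g \in G$ and $R|_K = S$.

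For part (1), I would first use that $\phi: G \to \Aut(H)$ is a group homomorphism to split $\phi_{s_G(b)l} = \phi_{s_G(b)} \phi_l$. Applied to $s_H(a)k$, the triviality of $\phi|_L$ on $K$ gives $\phi_l(s_H(a)k) = \phi_l(s_H(a))\, k = s_H(a)\, f(l,a)\, k$ by the definition of $f$. Applying $\phi_{s_G(b)}$ and using the definition of $\rho$ to rewrite $\phi_{s_G(b)}(s_H(a)) = s_H(\beta_b(a))\rho(a,b)$ then yields \eqref{feqn}.

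For part (2), I would apply the Rota--Baxter identity $R(h_1)R(h_2) = R(h_1\phi_{R(h_1)}(h_2))$ with $h_1 = s_H(a)$ and $h_2 = \phi^{-1}_{R(s_H(a))}(k)$. The element $h_2$ lies in $K$ by the ideal condition \eqref{I1}, so $R(h_2) = S(h_2)$, and $R(s_H(a)) = s_G(T(a))\chi(a)$ directly from the definition of $\chi$. The key final step, and the main subtlety, is identifying $\phi^{-1}_{R(s_H(a))}(k)$ with $\phi^{-1}_{s_G(T(a))}(k)$. For this I would verify that $\chi(a) \in L$ by applying $\pi_2$ to $\chi(a) = s_G(T(a))^{-1}R(s_H(a))$ and using the morphism condition $\pi_2 R = T \pi_1$ together with $\pi_2 s_G = \Id_B$ and $\pi_1 s_H = \Id_A$; this yields $\pi_2(\chi(a)) = 1$, hence $\chi(a) \in \Ker(\pi_2) = L$. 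Consequently $\phi_{R(s_H(a))}|_K = \phi_{s_G(T(a))}\phi_{\chi(a)}|_K = \phi_{s_G(T(a))}|_K$ by triviality of $\phi|_L$ on $K$, and \eqref{relativecon} follows.

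No serious obstacle is expected beyond careful bookkeeping; the only genuinely delicate point is the identification of the two automorphisms on $K$, which forces one to check that the cocycle $\chi(a)$ actually lands in $L$, where the action is trivial.
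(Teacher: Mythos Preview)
Your argument is correct in both parts: the computation for \eqref{feqn} via $\phi_{s_G(b)l}=\phi_{s_G(b)}\phi_l$ together with the triviality of $\phi_l$ on $K$ is exactly the right unfolding, and for \eqref{relativecon} the use of the Rota--Baxter identity with $h_1=s_H(a)$, $h_2=\phi^{-1}_{R(s_H(a))}(k)$, followed by the verification that $\chi(a)\in L$ so that $\phi_{\chi(a)}|_K=\Id_K$, is precisely what is needed. Note, however, that the paper does not give its own proof of this proposition: it is quoted from \cite[p.11]{BRS2023} and stated without argument, so there is no in-paper proof to compare against; your direct computation is the standard one.
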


\begin{lemma}\label{properties of f}\cite[Lemma 3.9]{BRS2023}
	Let $\mathcal{E}$ be an abelian extension of relative Rota--Baxter groups. Then the following hold:
	\begin{enumerate}
		\item The map $f: L \times A \longrightarrow K$ defined by
		$$ f(l,a)= s_H( a)^{-1} \phi_l(s_H(a))$$
		for $l \in L$ and $a \in A$, is independent of the choice of the section $s_H$.
		\item $f(l_1l_2,a) = f( l_1, a)f( l_2,a)$ for all $l_1, l_2 \in L$ and $a \in A$.
		\item $f(l, a_1 a_2) = \mu_{ a_2}(f(l,a_1))f(l, a_2)$ for all $l \in L$ and $a_1, a_2 \in A$.
	\end{enumerate}
\end{lemma}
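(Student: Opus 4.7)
The plan rests on one key observation: since the ideal $(K,L,\alpha,S)$ is trivial by the abelian-extension hypothesis, $\alpha$ is the trivial homomorphism, so $\phi_l(k) = \alpha_l(k) = k$ for every $l \in L$ and $k \in K$. Combined with $K$ being abelian, this collapses all the conjugation and cocycle corrections that a priori appear in the three identities, and the proof becomes essentially a matter of bookkeeping.

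For part (1), I would take a second section $s_H'$ and write $s_H'(a) = s_H(a) k_a$ for some $k_a \in K$, which is forced by $\pi_1\, s_H' = \pi_1\, s_H = \Id_A$. Then
$$f'(l,a) = \bigl(s_H(a)k_a\bigr)^{-1}\phi_l\bigl(s_H(a)k_a\bigr) = k_a^{-1}\bigl(s_H(a)^{-1}\phi_l(s_H(a))\bigr)\phi_l(k_a) = k_a^{-1} f(l,a)\, k_a,$$
using $\phi_l(k_a) = k_a$, and this equals $f(l,a)$ since $f(l,a) \in K$ and $K$ is abelian. For part (2) I would compute directly: expand $\phi_{l_1 l_2}(s_H(a)) = \phi_{l_1}(s_H(a)\, f(l_2,a))$; since $\phi_{l_1}$ fixes $K$ pointwise, this equals $\phi_{l_1}(s_H(a))\, f(l_2,a)$, and premultiplying by $s_H(a)^{-1}$ gives $f(l_1,a)\, f(l_2,a)$.

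For part (3), I would exploit part (1) to replace the value $s_H(a_1 a_2)$ by $s_H(a_1) s_H(a_2)$: both project to $a_1 a_2$ under $\pi_1$, so altering the section at this single point does not change the value of $f$. Writing $h_i = s_H(a_i)$,
$$f(l, a_1 a_2) = h_2^{-1} h_1^{-1}\, \phi_l(h_1)\, \phi_l(h_2) = \bigl(h_2^{-1} f(l,a_1) h_2\bigr)\, f(l,a_2).$$
Reading $\mu_{a_2}$ as the conjugation action of $A$ on $K$ induced by the extension, namely $\mu_{a_2}(k) = s_H(a_2)^{-1} k\, s_H(a_2)$ (well-defined because $K$ is abelian and normal in $H$), the right-hand side is precisely $\mu_{a_2}(f(l,a_1))\, f(l,a_2)$. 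The main point requiring care is matching the $\mu$-convention used in the statement with the conjugation action just described; once this is aligned with the conventions of Section \ref{Extensions and second cohomology of relative Rota--Baxter groups}, no deep step remains, and the only mild subtlety in the argument is verifying that the pointwise section change used for part (3) is legitimately covered by part (1).
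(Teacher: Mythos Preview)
Your proof is correct in all three parts. The key insight---that the triviality of $(K,L,\alpha,S)$ forces $\phi_l$ to fix $K$ pointwise for every $l\in L$---is exactly the hinge on which each identity swings, and you have applied it cleanly. The only point worth a remark is your device in part~(3): modifying the section at the single argument $a_1a_2$ to take the value $s_H(a_1)s_H(a_2)$ is indeed legitimate, since any map $A\to H$ satisfying $\pi_1\circ s = \Id_A$ is a section and part~(1) applies verbatim. An equivalent but perhaps more transparent route is to write $s_H(a_1a_2)=s_H(a_1)s_H(a_2)\tau_1(a_1,a_2)^{-1}$ with $\tau_1(a_1,a_2)\in K$, expand directly, and then use that conjugation by $\tau_1(a_1,a_2)\in K$ is trivial on the abelian group $K$; this avoids the section-change trick entirely.

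As for comparison with the paper: this lemma is quoted from \cite[Lemma~3.9]{BRS2023} and no proof is given in the present paper, so there is nothing here to compare your argument against. Your proof is self-contained and matches what one would expect the original argument to be.
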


\begin{prop}\label{construction of actions}\cite[Proposition 3.7]{BRS2023}
	Consider the abelian extension 
	$$\mathcal{E} : \quad  {\bf 1} \longrightarrow (K,L, \alpha,S ) \stackrel{(i_1, i_2)}{\longrightarrow}  (H,G, \phi, R) \stackrel{(\pi_1, \pi_2)}{\longrightarrow} (A,B, \beta, T) \longrightarrow {\bf 1}$$
	of relative Rota--Baxter groups. Then the following hold:
	\begin{enumerate}
		\item  The map  $\nu: B \rightarrow \Aut(K)$ defined by 
		\begin{equation}\label{nuact}
			\nu_b(k):= \phi_{s_G(b)}(k)
		\end{equation}
		for $b \in B$ and $k \in K$, is a homomorphism of groups.
		\item  The $\mu: A \rightarrow \Aut(K)$ defined by 
		\begin{equation}\label{muact}
			\mu_a(k):=s_H(a)^{-1}\, k\, s_H(a)
		\end{equation}
		for $a \in A$ and $ k \in K$, is an anti-homomorphism of groups.
		\item The map $\sigma: B \rightarrow \Aut(L)$ defined by 
		\begin{equation}\label{sigmaact}
			\sigma_b(l):=s_G(b)^{-1}\,l\, s_G(b)
		\end{equation}
		for $b \in B$ and $ l \in K$, is an anti-homomorphism of groups.
		
		\item[] 	Further, all the maps are independent of the choice of a section to $\mathcal{E}$.
			\item The map $\tau_1: A \times A \rightarrow K$ given by
		\begin{equation}\label{mucocycle}
			\tau_1(a_1, a_2):= s_H(a_1 a_2)^{-1}s_H(a_1)s_H(a_2)
		\end{equation}
		for $a_1, a_2 \in A$ is a group 2-cocycle with respect to the action $\mu$.
		\item The map $\tau_2: B \times B \rightarrow L$ given by
		\begin{equation}\label{sigmacocycle}
			\tau_2(b_1, b_2):= s_G(b_1 b_2)^{-1}s_G(b_1)s_G(b_2)
		\end{equation}
	for	$b_1, b_2 \in B$ is a group 2-cocycle with respect to the action $\sigma$.
	\end{enumerate}
\end{prop}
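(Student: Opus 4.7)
The two structural facts that do almost all the work here are: (i) $(K,L,\alpha,S)$ is trivial, so $\phi_l|_K = \Id_K$ for every $l \in L$; and (ii) $K$ and $L$ are abelian. Because $(K,L,\alpha|,S|)$ is an ideal of $(H,G,\phi,R)$, we already have $\phi_g(K)\subseteq K$ for all $g\in G$ together with $K\trianglelefteq H$ and $L\trianglelefteq G$, so the formulas defining $\nu_b$, $\mu_a$, $\sigma_b$ automatically land in the correct automorphism groups; only the (anti)homomorphism identities need work.

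For parts (1)--(3), I would start from the section-defect relations
\[
s_H(a_1)s_H(a_2) = s_H(a_1 a_2)\,\tau_1(a_1,a_2) \annd s_G(b_1)s_G(b_2) = s_G(b_1 b_2)\,\tau_2(b_1,b_2),
\]
with $\tau_1(a_1,a_2)\in K$ and $\tau_2(b_1,b_2)\in L$. For $\nu$: apply $\phi$ to the second identity; since $\phi$ restricted to $L$ acts trivially on $K$ by (i), the $\tau_2$ factor drops out, and one reads off $\nu_{b_1b_2}=\nu_{b_1}\nu_{b_2}$. For $\mu$: substituting the first relation into $s_H(a_1a_2)^{-1}\,k\,s_H(a_1a_2)$ conjugates $\mu_{a_2}(\mu_{a_1}(k))$ by $\tau_1(a_1,a_2)\in K$, which is trivial by (ii); this gives $\mu_{a_1a_2}=\mu_{a_2}\mu_{a_1}$. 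The argument for $\sigma$ is formally identical, using only the abelianness of $L$.

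For section independence, any other sections of $\pi_1$ and $\pi_2$ have the form $s_H'(a)=s_H(a)\lambda(a)$ and $s_G'(b)=s_G(b)\gamma(b)$ for some maps $\lambda\colon A\to K$ and $\gamma\colon B\to L$. Substituting into the three definitions and invoking (i) and (ii) once more shows $\mu_a'=\mu_a$, $\nu_b'=\nu_b$, $\sigma_b'=\sigma_b$.

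For the cocycle assertions (5) and (6), I would apply associativity to $s_H(a_1)s_H(a_2)s_H(a_3)$: expanding each of the two bracketings via the first defect relation and then pushing $\tau_1(a_1,a_2)$ past $s_H(a_3)$ produces an extra factor $\mu_{a_3}(\tau_1(a_1,a_2))$; equating the two expansions gives
\[
\mu_{a_3}\bigl(\tau_1(a_1,a_2)\bigr)\,\tau_1(a_1a_2,a_3) = \tau_1(a_1,a_2a_3)\,\tau_1(a_2,a_3),
\]
which is precisely the 2-cocycle identity for the \emph{right} action $\mu$. The analogous computation with $s_G$, $\tau_2$, and $\sigma$ yields (6). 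There is no deep obstacle; the only bookkeeping subtlety is that one must treat $\mu$ and $\sigma$ as right actions, so the cocycle formula takes its ``right-action'' form. Notably, the relative Rota--Baxter operator $R$ plays no role in these identities, which is why everything reduces to the two facts (i) and (ii).
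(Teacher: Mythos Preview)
Your proof plan is correct and complete; the arguments for (1)--(3), section independence, and the cocycle identities (4)--(5) are the standard ones, and you have correctly identified the two facts (triviality of $\alpha$ and abelianness of $K$, $L$) that make everything go through. Note, however, that the paper does not supply its own proof of this proposition: it is merely quoted from \cite[Proposition~3.7]{BRS2023}, so there is no in-paper argument to compare your approach against.
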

By examining the relationships established among $\nu, \mu, \sigma,  f$, and their corresponding properties outlined in Lemma \ref{properties of f} and Proposition \ref{construction of actions},  the following definition of a module over a relative Rota--Baxter group has been introduced in \cite{BRS2023}.
\begin{defn}\label{moddefn}
	A module over a relative Rota--Baxter group $(A, B,\beta, T)$ is a trivial relative Rota--Baxter group $(K, L, \alpha, S)$ such that there exists a quadruple $(\nu, \mu, \sigma, f)$ (called action) of maps satisfying the following conditions:
	\begin{enumerate}
		\item The group $K$ is a left $B$-module and a right $A$-module with respect to the actions $\nu:B \to \Aut(K) $ and $\mu: A \to \Aut(K)$, respectively.
		\item The group $L$ is a right $B$-module with respect to the action $\sigma: B \to \Aut(L)$.
		\item The  map  $f: L \times A \to K$ has the property that $f(-,a): L \rightarrow K$ is a homomorphism for all $a \in A$ and $f(l,-): A \rightarrow K$ is a derivation with respect to the action $\mu$ for all $l \in L$.
		\item $S\big(\nu^{-1}_{T(a)}(\mu_{a}(k)) \,\nu^{-1}_{T( a)}(f(S(k), a))\big)=\;\sigma_{T(a)}(S(k))$ for all $ a \in A$ and $k \in K$.
		\item $\nu_b(\mu_{a}(k))= \;   \mu_{ \beta{_{b}(a)}}( \nu_{b} (k))$ for all $a \in A$, $b \in B$ and $k \in K$. 
	\end{enumerate}
\end{defn}

Let $G$ be any group  and $M$ an abelian group , let $C^n(G, M)$ denote the group of maps from $G^n$ to $M$ that vanish on degenerate tuples. Similarly for  any group $H$, let $C(G \times H, M)$ be the group of maps from $G \times H$ to $M$ that vanish on degenerate tuples. In other words, for any $f \in C(G \times H, M)$, it satisfies the conditions $f(g, 1_H) = f(1_G, h) = 1_M$, where $1_K$ represents the identity element of a group $K$.

 Consider a module $\mathcal{K} = (K, L, \alpha, S)$ over $\mathcal{A} = (A, B, \beta, T)$ with  action  $(\nu, \mu, \sigma, f)$. We set the following groups
\begin{eqnarray*}
\C^{1}_{RRB}(\mathcal{A}, \mathcal{K}) &:=& C^1(A, K) \oplus C^1(B,L),\\
\C^{2}_{RRB}(\mathcal{A}, \mathcal{K}) &:=& C^2(A, K) \oplus C^2(B,L) \oplus C(A \times B, K) \oplus C(A,L).
\end{eqnarray*}

Consider the subgroup $\Z^1_{RRB}(\mathcal{A}, \mathcal{K})$ of $\C^{1}_{RRB}(\mathcal{A}, \mathcal{K})$, comprising pairs $(\kappa_1, \kappa_2) \in \C^{1}_{RRB}$ that satisfy the following conditions:
\begin{eqnarray}
 \kappa_1(a_1 a_2)&=&\kappa_1(a_2) \mu_{a_2} (\kappa_1(a_1)), \label{der1}\\
\kappa_2(b_1 b_2) &= & \kappa_2(b_2) \sigma_{b_2} (\kappa_2(b_1)), \label{der2}\\
\nu_b \big(f(\kappa_2(b_1), a_1)\kappa_1(a_1) \big) & =  & \kappa_1(\beta_{b_1}(a_1)), \label{der3}\\
S \big(\nu^{-1}_{T(a_1)}(\kappa_1(a_1)) \big) & = & \kappa_2(T(a_1)) \label{der4} 
\end{eqnarray}
for all $a_1, a_2 \in A$ and $b_1, b_2 \in B.$

Let $\Z^2_{RRB}(\mathcal{A}, \mathcal{K})$ be a subgroup of $\C^{2}_{RRB}(\mathcal{A}, \mathcal{K})$ consisting of elements $(\tau_1, \tau_2, \rho, \chi) \in \C^{2}_{RRB}(\mathcal{A}, \mathcal{K})$ that satisfy the following conditions for all $a_1, a_2, a_3 \in A$ and $b_1, b_2, b_3 \in B.$
\begin{eqnarray}
\tau_1(a_2, a_3) \tau_1( a_1, a_2 a_3) &=& \tau_1(a_1 a_2, a_3) \mu_{a_3}(\tau_1(a_1, a_2)),\label{cocycle1}\\
\tau_2(b_2, b_3) \tau_2( b_1, b_2 b_3) &=& \tau_2(b_1 b_2, b_3) \sigma_{b_3}(\tau_2(b_1, b_2)),\label{cocycle2}\\
 \rho(\beta_{b_2}(a_1), b_1)  \,\nu_{b_1}(\rho(a_1,b_2)) &=& \rho(a_1, b_1 b_2) \, \nu_{b_1 b_2}( f(\tau_2(b_1, b_2), a_1)) ,\label{cocycle3}\\
\rho(a_1 a_2, b_1) \, \nu_{b_1}(\tau_1(a_1, a_2))   &=& \mu_{ \beta_{b_1}(a_2)}(\rho(a_1,b_1))\; \rho(a_2, b_1) \;  \tau_1(\beta_{b_1}(a_1), \beta_{b_1}(a_2))  ,\label{cocycle4}\\
 \tau_2(T(a_1), T(a_2) ) \delta^1_{\sigma}(\chi)(a_1, a_2) &= &  S \big(\nu^{-1}_{T(a_1 \circ_T a_2)}\big(\rho(a_2, T(a_1))  \, \tau_1(a_1,\beta_{T(a_1)}(a_2)) \nonumber \\
 && \,\nu_{T(a_1)}(f(\chi(a_1), a_2))  \big)\big)\label{cocycle5},
\end{eqnarray}
where $\delta^1_{\sigma}(\chi)(a_1, a_2)= \chi(a_2) \chi(a_1 \circ_T a_2)^{-1} \sigma_{T(a_2)}(\chi(a_1)). $ 

Next, consider the subgroup $\B^2_{RRB}(\mathcal{A}, \mathcal{K})$ of $\Z^2_{RRB}(\mathcal{A}, \mathcal{K})$, consisting of elements $(\tau_1, \tau_2, \rho, \chi)$ such that there exist $\kappa_1, \kappa_2 \in  \C^{1}_{RRB}(\mathcal{A}, \mathcal{K})$ and satisfy the following
\begin{eqnarray*}
\tau_1(a_1, a_2) &=&\kappa_1(a_1 a_2)^{-1}\kappa_1(a_2) \mu_{a_2} (\kappa_1(a_1)),\\
\tau_2(b_1, b_2) &=& \kappa_2(b_1 b_2)^{-1} \kappa_2(b_2) \sigma_{b_2} (\kappa_2(b_1)),\\
\rho(a_1, a_2) &=&\nu_b\big( f(\kappa_2(b_1), a_1)\kappa_1(a_1) \big) \big(\kappa_1(\beta_{b_1}(a_1))\big)^{-1},\\
 \chi(a_1) &=& S \big(\nu^{-1}_{T(a_1)}(\kappa_1(a_1)) \big) \big( \kappa_2(T(a_1)) \big)^{-1}
\end{eqnarray*}
for all $a_1, a_2 \in A$ and $b_1, b_2 \in B.$

We define the second cohomology group of $\mathcal{A}=(A, B, \beta, T)$  with coefficients in $\mathcal{K}=(K, L, \alpha, S)$ by  $$\Ho^2_{RRB}(\mathcal{A}, \mathcal{K}):=\Z^2_{RRB}(\mathcal{A}, \mathcal{K})/  \B^2_{RRB}(\mathcal{A}, \mathcal{K}).$$

 Let $\Ext(\mathcal{A},\mathcal{K})$ denote the set of all the equivalence classes of extensions of $\mathcal{A}$ by $\mathcal{K}$ and let $\Ext_{(\nu, \mu, \sigma, f)}(\mathcal{A}, \mathcal{K})$ denote the set of equivalence classes of extensions of $\mathcal{A}$ by $\mathcal{K}$ for which the associated action is $(\nu, \mu, \sigma, f)$. Then we can write
 $$\Ext(\mathcal{A}, \mathcal{K})=\bigsqcup_{(\nu, \mu, \sigma, f)} \Ext_{(\nu, \mu, \sigma, f)}(\mathcal{A}, \mathcal{K}).$$
  We have the following result.

\begin{thm}\label{ext and cohom bijection}\cite[Theorem 3.18]{BRS2023}
	Let $\mathcal{A}= (A,B, \beta, T)$ be a relative Rota--Baxter group and $\mathcal{K}=  (K,L,\alpha,S )$ a trivial relative Rota--Baxter group, where $K$ and $L$ are abelian groups.  Let  $(\nu, \mu, \sigma, f)$ be the quadruple of actions that makes $\mathcal{K}$ into an $\mathcal{A}$-module. Then there is a bijection between $\Ext_{(\nu, \mu, \sigma, f)}(\mathcal{A}, \mathcal{K})$ and $\Ho^2_{RRB}(\mathcal{A}, \mathcal{K})$.  
\end{thm}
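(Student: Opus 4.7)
The plan is to construct mutually inverse maps $\Theta : \Ext_{(\nu,\mu,\sigma,f)}(\mathcal{A},\mathcal{K}) \to \Ho^2_{RRB}(\mathcal{A},\mathcal{K})$ and $\Theta' : \Ho^2_{RRB}(\mathcal{A},\mathcal{K}) \to \Ext_{(\nu,\mu,\sigma,f)}(\mathcal{A},\mathcal{K})$. For $\Theta$, given an abelian extension $\mathcal{E}$, I would pick any set-theoretic section $(s_H, s_G)$ and assign the quadruple $(\tau_1, \tau_2, \rho, \chi)$, where $\tau_1, \tau_2$ are the classical group $2$-cocycles from Proposition \ref{construction of actions} and $\rho, \chi$ are the maps from Proposition \ref{f rho chi eqn}. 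The main verification is that this quadruple lies in $\Z^2_{RRB}(\mathcal{A},\mathcal{K})$: identities \eqref{cocycle1}--\eqref{cocycle2} are the standard $2$-cocycle identities for the induced extensions $\mathcal{E}_1$ and $\mathcal{E}_2$; identity \eqref{cocycle3} follows by expanding $\phi_{s_G(b_1)s_G(b_2)}(s_H(a_1))$ in two ways using that $\phi$ is a homomorphism and applying \eqref{feqn}; identity \eqref{cocycle4} comes from expanding $\phi_{s_G(b_1)}(s_H(a_1)s_H(a_2))$; and identity \eqref{cocycle5} comes from computing both sides of $R(s_H(a_1))R(s_H(a_2)) = R(s_H(a_1)\phi_{R(s_H(a_1))}(s_H(a_2)))$ via \eqref{relativecon}. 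I would then show that replacing $(s_H, s_G)$ by another section alters $(\tau_1, \tau_2, \rho, \chi)$ by a coboundary in $\B^2_{RRB}$, with the correcting pair $(\kappa_1, \kappa_2)$ given by $\kappa_1(a) = s_H(a)^{-1} s'_H(a)$ and $\kappa_2(b) = s_G(b)^{-1} s'_G(b)$, and that equivalent extensions produce cohomologous cocycles.

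For $\Theta'$, given $(\tau_1, \tau_2, \rho, \chi) \in \Z^2_{RRB}(\mathcal{A},\mathcal{K})$, I would define $H := K \times A$ and $G := L \times B$ as sets, with group operations twisted by $\tau_1$ and $\tau_2$ using the anti-homomorphisms $\mu$ and $\sigma$, and then define $\phi$ and $R$ by the formulas \eqref{feqn} and \eqref{relativecon} with the canonical section $a \mapsto (1,a)$, $b \mapsto (1,b)$. Conditions \eqref{cocycle1}--\eqref{cocycle2} give associativity of the two group laws; \eqref{cocycle3} and \eqref{cocycle4}, together with the module axioms of Definition \ref{moddefn}, ensure that $\phi$ is a well-defined group homomorphism into $\Aut(H)$; and \eqref{cocycle5} is exactly what is needed for $R$ to satisfy the Rota--Baxter identity. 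The obvious inclusions and projections then furnish an abelian extension, and a direct check using Proposition \ref{construction of actions} shows that its associated action is $(\nu, \mu, \sigma, f)$. Cohomologous cocycles will yield equivalent extensions, since a correcting pair $(\kappa_1, \kappa_2)$ produces a canonical isomorphism of the form $(k,a) \mapsto (k\kappa_1(a), a)$ on $H$ and $(l,b) \mapsto (l\kappa_2(b), b)$ on $G$.

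Finally, I would verify that $\Theta' \circ \Theta$ is the identity — the equivalence being $h \mapsto (h s_H(\pi_1(h))^{-1}, \pi_1(h))$ together with its analogue on $G$ — and that $\Theta \circ \Theta'$ is the identity, by reading off the cocycle from the constructed extension using the canonical section. The main obstacle will be the verification of \eqref{cocycle5} in both directions: unwinding the Rota--Baxter identity on elements of the form $s_H(a_i) k_i$ forces one to move $\phi$ across products repeatedly via \eqref{feqn}, to compare elements in $K$ and $L$ using compatibility axioms (4)--(5) of Definition \ref{moddefn}, and to use that $S$ is a genuine group homomorphism because $\mathcal{K}$ is trivial. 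The term $\delta^1_\sigma(\chi)(a_1, a_2)$ appearing in \eqref{cocycle5} arises precisely from the failure of $\chi$ to be an $L$-valued crossed homomorphism, so one must keep careful bookkeeping of the section when transporting data between the extension and the cocycle.
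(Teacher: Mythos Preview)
The paper does not contain a proof of this theorem: it is quoted verbatim from \cite[Theorem~3.18]{BRS2023} and used as a black box in Section~\ref{Wells-like exact sequence for relative Rota--Baxter groups}. Your outline is the standard cocycle/extension correspondence, and it is exactly the argument one expects the cited reference to carry out, so there is nothing to compare against here.

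That said, your sketch is sound. One small point of care: in building $\Theta'$ you must also check that $\phi_g$ is an \emph{automorphism} of the twisted group $H=K\times_{\tau_1}A$ (not just a set map), which uses module axiom~(5) of Definition~\ref{moddefn} together with \eqref{cocycle4}; and in verifying the Rota--Baxter identity for the constructed $R$, module axiom~(4) is what allows you to move $S$ past $\nu_{T(a)}^{-1}$ and $\mu_a$, so it is worth flagging that step explicitly. Otherwise the bookkeeping you describe---especially the origin of $\delta^1_\sigma(\chi)$ in \eqref{cocycle5}---is correct.
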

\medskip

	\section{Wells-like exact sequence for relative Rota--Baxter groups}\label{Wells-like exact sequence for relative Rota--Baxter groups}
In this section, we establish a Wells-like exact sequence for abelian extensions of relative Rota--Baxter groups. Let 
$$\mathcal{E} : \quad  {\bf 1} \longrightarrow \mathcal{K}=(K,L, \alpha,S ) \stackrel{(i_1, i_2)}{\longrightarrow}  \mathcal{H}=(H,G, \phi, R) \stackrel{(\pi_1, \pi_2)}{\longrightarrow}\mathcal{A}= (A,B, \beta, T) \longrightarrow {\bf 1}$$ be an abelian extension of $\mathcal{A}$ by $\mathcal{K}$. We first define a group action on the set  $\Ext(\mathcal{A}, \mathcal{K})$.
	
	Let $\Aut(\mathcal{A})$  and $\Aut(\mathcal{K})$ denote the group of automorphisms of relative Rota--Baxter groups $\mathcal{A}$ and  $\mathcal{K}$, respectively. We define an  action of  $\Aut(\mathcal{A}) \times \Aut(\mathcal{K})$ on $\Ext(\mathcal{A},\mathcal{K})$ as follows. For a pair of automorphisms $(\psi, \theta) \in \Aut(\mathcal{A}) \times \Aut(\mathcal{K})$, where $\psi=(\psi_1,\psi_2)$ and  $\theta=(\theta_1,\theta_2)$, we  define a new extension
$$\mathcal{E}^{(\psi, \theta)} : \quad  {\bf 1} \longrightarrow \mathcal{K} \stackrel{(\theta_1, \theta_2)}{\longrightarrow}  \mathcal{H'}=(H',G', \phi', R') \stackrel{(\psi_1^{-1}\pi_1, \psi_2^{-1}\pi_2)}{\longrightarrow}\mathcal{A}\longrightarrow {\bf 1}.$$

Consider two equivalent extensions $$\mathcal{E}_1 : \quad  {\bf 1} \longrightarrow \mathcal{K} \stackrel{(i_1, i_2)}{\longrightarrow}  \mathcal{H}_1=(H_1,G_1, \phi_1, R_1) \stackrel{(\pi_1, \pi_2)}{\longrightarrow}\mathcal{A} \longrightarrow {\bf 1},$$ $$\mathcal{E}_2 : \quad  {\bf 1} \longrightarrow \mathcal{K} \stackrel{(i'_1, i'_2)}{\longrightarrow}  \mathcal{H}_2=(H_2,G_2, \phi_2, R_2) \stackrel{(\pi'_1, \pi'_2)}{\longrightarrow}\mathcal{A} \longrightarrow {\bf 1}$$
of $\mathcal{A}$ by $\mathcal{K}$. Then, for any $(\psi, \theta) \in \Aut(\mathcal{A}) \times \Aut(\mathcal{K})$, we observe that the extensions $\mathcal{E}_1^{(\psi, \theta)}$ and $\mathcal{E}_2^{(\psi, \theta)}$ are also equivalent. Thus, we can define a map $\Ext(\mathcal{A},\mathcal{K}) \to \Ext(\mathcal{A},\mathcal{K})$ by 
	\begin{equation}\label{act1 RRB}
		[\mathcal{E}] \mapsto  [ \mathcal{E}^{(\psi, \theta)}].
	\end{equation}
	If $\psi$ and $\theta$ are identity automorphisms, then obviously $\mathcal{E}^{(\psi, \theta)} = \mathcal{E}$. It is also easy to see that  
	$$[\mathcal{E}]  ^{(\psi, \theta) (\psi^{\prime}, \theta^{\prime})}=  \big([\mathcal{E}]^{(\psi, \theta)}\big)^{ (\psi^{\prime}, \theta^{\prime})}.$$
Hence, the association \eqref{act1 RRB} gives a right action of the group $\Aut(\mathcal{A}) \times \Aut(\mathcal{K})$  on the set $\Ext(\mathcal{A},\mathcal{K})$.
	
We know that $$\Ext(\mathcal{A}, \mathcal{K})=\bigsqcup_{(\nu, \mu, \sigma, f)} \Ext_{(\nu, \mu, \sigma, f)}(\mathcal{A}, \mathcal{K}).$$
For the rest of this section, $\mathcal{K}$ is an $\mathcal{A}$-module via the action  $(\nu,\mu,\sigma,f)$. Define the set
	\begin{eqnarray}\label{act stb}
	\C_{(\nu,\mu,\sigma,f)} &=& \big\{ (\psi, \theta) \in \Aut(\mathcal{A}) \times \Aut(\mathcal{K}) \mid \nu_b=\theta_1^{-1} \nu_{\psi_{2} (b)} \theta_1 ,~ \mu_a=\theta_1^{-1} \mu_{\psi_{1} (a)} \theta_1 ,\\
	&&
	 \sigma_b=\theta_2^{-1} \sigma_{\psi_{2} (b)} \theta_2\textit{ and }\theta_1(f(l,a))=f(\theta_2(l),\psi_1(a)) \big\}\nonumber.
	 \end{eqnarray}
Then  $\C_{(\nu,\mu,\sigma,f)}$ is a subgroup of $\Aut(\mathcal{A}) \times \Aut(\mathcal{K})$ and it  acts on  $\Ext_{(\nu,\mu,\sigma,f)}(\mathcal{A}, \mathcal{K})$ by the same rule as given in \eqref{act1 RRB}. Also, the set $\Ext_{(\nu,\mu,\sigma,f)}(\mathcal{A}, \mathcal{K})$ is invariant under the action of  $\C_{(\nu,\mu,\sigma,f)}$.
	
	Next we consider an action of $ \C_{(\nu,\mu,\sigma,f)}$ on $\Ho^2_{RRB}(\mathcal{A},\mathcal{K})$.
	Let $(\psi, \theta) \in \Aut(\mathcal{A}) \times \Aut(\mathcal{K})$. Let $\tau_1 \in \Map(A^n, K)$, $\tau_2 \in \Map(B^n, L)$, where $n \ge 1$ is an integer. Define $\tau_1^{(\psi, \theta)} : A^n \to K$ and $\tau_2^{(\psi, \theta)} : B^n \to L$ by
\begin{eqnarray*}
\tau_1^{(\psi, \theta)}(a_1, a_2, \ldots, a_n) &=& \theta_1^{-1}\big(\tau_1(\psi_1(a_1), \psi_1(a_2), \ldots, \psi_1(a_n))\big),\\
\tau_2^{(\psi, \theta)}(b_1, b_2, \ldots, b_n) &=& \theta_2^{-1}\big(\tau_2(\psi_2(b_1), \psi_2(b_2), \ldots, \psi_2(b_n))\big).
\end{eqnarray*}
 
Also, define $\rho^{(\psi, \theta)}:A\times B\to K$ and $\chi^{(\psi, \theta)}:A\to L$ by 
\begin{eqnarray*}
\rho^{(\psi, \theta)}(a,b)&=&\theta_1^{-1}(\rho(\psi_1(a),\psi_2(b))),\\
\chi^{(\psi, \theta)}(a)&=&\theta_2^{-1}(\chi(\psi_1(a))).
\end{eqnarray*}

	It is not difficult to see that the group $\Aut(\mathcal{A}) \times \Aut(\mathcal{K})$ acts on the group $C^{n}(\mathcal{A},\mathcal{K})$ by automorphisms, given by the association
	\begin{eqnarray}
		\tau_1 \mapsto \tau_1^{(\psi, \theta)},~ 	\tau_2 \mapsto \tau_2^{(\psi, \theta)},\label{act2 RRB}\\
		\rho\mapsto\rho^{(\psi, \theta)},~ \chi\mapsto\chi^{(\psi, \theta)}.\label{act2 RRB1}
		\end{eqnarray}
 We are interested in the action of $\C_{ (\nu,\mu,\sigma,f)}$ on $\Ho_{RRB}^2(\mathcal{A}, \mathcal{K})$. The association \eqref{act2 RRB},\eqref{act2 RRB1} induces an action of $\C_{(\nu,\mu,\sigma,f)}$ on $	C^{2}_{RRB} = C^2(A, K) \oplus C^2(B,L) \oplus C(A \times B, K) \oplus C(A,L)$ by setting 
	\begin{equation}\label{act3 RRB}
		(\tau_1,\tau_2,\rho,\chi) \mapsto \big(\tau_1^{(\psi, \theta)}, \tau_2^{(\psi, \theta)},\rho^{(\psi, \theta)},\chi^{(\psi, \theta)}\big).
	\end{equation}

\begin{lemma}\label{lemma-act3 RRB}
The following assertions hold for each $(\psi, \theta) \in \C_{(\nu,\mu,\sigma,f)}$:
\begin{enumerate}
\item If $(\tau_1,\tau_2,\rho,\chi) \in \Z^2_{RRB}(\mathcal{A},\mathcal{K})$, then $\big(\tau_1^{(\psi, \theta)}, \tau_2^{(\psi, \theta)},\rho^{(\psi, \theta)},\chi^{(\psi, \theta)}\big) \in \Z^2_{RRB}(\mathcal{A},\mathcal{K})$.
\item
 If $(\tau_1,\tau_2,\rho,\chi)\in \B^2_{RRB}(\mathcal{A},\mathcal{K})$, then $\big(\tau_1^{(\psi, \theta)}, \tau_2^{(\psi, \theta)},\rho^{(\psi, \theta)},\chi^{(\psi, \theta)}\big) \in \B^2_{RRB}(\mathcal{A},\mathcal{K})$.
\end{enumerate}
	Hence, the association \eqref{act3 RRB} gives an action of $\C_{(\nu,\mu,\sigma,f)}$ on $\Ho_{RRB}^2(\mathcal{A}, \mathcal{K})$ by automorphisms, defined by
	$$[	(\tau_1,\tau_2,\rho,\chi)]^{(\psi, \kappa)} =[\big(\tau_1^{(\psi, \theta)}, \tau_2^{(\psi, \theta)},\rho^{(\psi, \theta)},\chi^{(\psi, \theta)}\big)].$$
\end{lemma}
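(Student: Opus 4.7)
The plan is to verify, by direct substitution, that each of the five cocycle identities \eqref{cocycle1}--\eqref{cocycle5} and the four coboundary equations defining $\B^2_{RRB}(\mathcal{A},\mathcal{K})$ are preserved under the transformation $(\tau_1,\tau_2,\rho,\chi)\mapsto(\tau_1^{(\psi,\theta)},\tau_2^{(\psi,\theta)},\rho^{(\psi,\theta)},\chi^{(\psi,\theta)})$. The whole argument is a bookkeeping exercise driven by the four relations packaged into the definition of $\C_{(\nu,\mu,\sigma,f)}$, together with the standard compatibilities that follow from $\psi$ and $\theta$ being morphisms of relative Rota--Baxter groups, namely $\psi_1\,\beta_b=\beta_{\psi_2(b)}\,\psi_1$, $\psi_2\,T=T\,\psi_1$, and $\theta_2\,S=S\,\theta_1$. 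These are precisely what is needed to slide $\theta_1^{-1}$ and $\theta_2^{-1}$ through the various actions and structure maps appearing in the cocycle conditions.

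For part (1), I would handle the five identities one at a time with the same template in each case: evaluate the original identity on $(\psi_1(a_1),\ldots)$ or $(\psi_2(b_1),\ldots)$, apply the appropriate $\theta_i^{-1}$ to both sides, and then commute each $\theta_i^{-1}$ past the occurrences of $\nu$, $\mu$, $\sigma$, and $f$ using the defining relations of $\C_{(\nu,\mu,\sigma,f)}$. For example, \eqref{cocycle1} needs only that $\theta_1^{-1}$ is a group homomorphism and the rewriting $\theta_1^{-1}\mu_{\psi_1(a)}=\mu_a\theta_1^{-1}$ extracted from $\mu_a=\theta_1^{-1}\mu_{\psi_1(a)}\theta_1$; identity \eqref{cocycle2} is its symmetric analogue for $\theta_2^{-1}$ and $\sigma$; and identities \eqref{cocycle3}--\eqref{cocycle4} additionally invoke $\nu_b=\theta_1^{-1}\nu_{\psi_2(b)}\theta_1$, $\psi_1\beta_b=\beta_{\psi_2(b)}\psi_1$, and $\theta_1(f(l,a))=f(\theta_2(l),\psi_1(a))$ in order to migrate the $\theta$'s through the $f$-term.

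For part (2), if $(\tau_1,\tau_2,\rho,\chi)\in \B^2_{RRB}(\mathcal{A},\mathcal{K})$ arises from $(\kappa_1,\kappa_2)\in \C^1_{RRB}(\mathcal{A},\mathcal{K})$, I define $\kappa_1^{(\psi,\theta)}(a):=\theta_1^{-1}(\kappa_1(\psi_1(a)))$ and $\kappa_2^{(\psi,\theta)}(b):=\theta_2^{-1}(\kappa_2(\psi_2(b)))$; the same substitution-and-commutation recipe used in part (1) then shows that the transformed tuple is the coboundary of $(\kappa_1^{(\psi,\theta)},\kappa_2^{(\psi,\theta)})$. With both parts in hand, the assignment $[(\tau_1,\tau_2,\rho,\chi)]\mapsto[(\tau_1^{(\psi,\theta)},\tau_2^{(\psi,\theta)},\rho^{(\psi,\theta)},\chi^{(\psi,\theta)})]$ descends to a well-defined map on $\Ho^2_{RRB}(\mathcal{A},\mathcal{K})$, and it acts by automorphisms because the ambient action \eqref{act3 RRB} on $\C^{2}_{RRB}(\mathcal{A},\mathcal{K})$ already does.

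The hard step will be \eqref{cocycle5}, which couples all four components $\tau_1,\tau_2,\rho,\chi$ through the Rota--Baxter operator $S$ and the operator $\delta^1_\sigma$, and involves both $T$ in the arguments and the composition $\circ_T$. Here I plan to first apply $\psi_2 T=T\psi_1$ to rewrite $T(\psi_1(a_i))$ as $\psi_2(T(a_i))$, then use $\theta_2 S=S\theta_1$ to commute the outer $S$ past a $\theta_1^{-1}$, and finally pull $\nu^{-1}_{T(a_1\circ_T a_2)}$ and the $f$-term through using the remaining compatibility relations; disentangling the interaction of $\delta^1_\sigma(\chi)$ with $\theta_2^{-1}$ requires careful accounting, but no new idea beyond those used for \eqref{cocycle1}--\eqref{cocycle4} should be needed.
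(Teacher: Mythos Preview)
Your plan is correct and matches the paper's proof essentially line for line: the paper likewise verifies \eqref{cocycle1}--\eqref{cocycle5} by substituting $\psi_i$ into the arguments, applying $\theta_i^{-1}$, and commuting it past $\nu,\mu,\sigma,f$ via the relations in \eqref{act stb}, and for part (2) it exhibits the transformed tuple as the coboundary of $(\theta_1^{-1}\kappa_1\psi_1,\theta_2^{-1}\kappa_2\psi_2)$, which is exactly your $(\kappa_1^{(\psi,\theta)},\kappa_2^{(\psi,\theta)})$. The only ingredient you might want to isolate explicitly when handling \eqref{cocycle5} is the identity $\psi_1(a_1\circ_T a_2)=\psi_1(a_1)\circ_T\psi_1(a_2)$, which the paper records separately before the main computation.
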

\begin{proof}
If $(\tau_1,\tau_2,\rho,\chi) \in \Z^2_{RRB}(\mathcal{A},\mathcal{K})$, then, for all $a_1, a_2, a_3 \in A$, we have
	\begin{eqnarray}
		\tau_1^{(\psi,\theta)}(a_2, a_3) \tau_1^{(\psi,\theta)}( a_1, a_2 a_3)&=&	\theta_1^{-1}\big(\tau_1(\psi_1(a_2), \psi_1(a_3))\big)\theta_1^{-1}\big( \tau_1( \psi_1(a_1), \psi_1(a_2 a_3))\big)\nonumber\\
		&=&\theta_1^{-1}\big(\tau_1(\psi_1(a_2), \psi_1(a_3))\tau_1(\psi_1( a_1), \psi_1(a_2 )\psi_1(a_3))\big),\nonumber\\
		&&\text{since }\theta_1\in \Aut(K)\nonumber\\
		&=&\theta_1^{-1}\big(\tau_1(\psi_1(a_1) \psi_1(a_2),\psi_1( a_3)) \mu_{\psi_1(a_3)}(\tau_1(\psi_1(a_1), \psi_1(a_2)))\big),\nonumber\\
		&&	\text{using }\eqref{cocycle1}\nonumber\\
		&=&\theta_1^{-1}\big(\tau_1(\psi_1(a_1 a_2),\psi_1( a_3))\big)\theta_1^{-1}\big(\mu_{\psi_1(a_3)}(\tau_1(\psi_1(a_1), \psi_1(a_2)))\big)\nonumber\\
		&=&\theta_1^{-1}\big(\tau_1(\psi_1(a_1 a_2),\psi_1( a_3))\big)\mu_{a_3}\big(\theta_1^{-1}(\tau_1(\psi_1(a_1), \psi_1(a_2)))\big),\nonumber\\
		&&\text{using \eqref{act stb} }\nonumber\\
		&=&\tau_1^{(\psi,\theta)}(a_1 a_2, a_3) \mu_{a_3}(\tau_1^{(\psi,\theta)}(a_1, a_2)).\label{z2 condn1}
	\end{eqnarray}
Similarly, we can show that, for all $b_1,b_2,b_3\in B$,
	\begin{equation}
		\tau_2^{(\psi,\theta)}(b_2, b_3) \tau_2^{(\psi,\theta)}( b_1, b_2 b_3) = \tau_2^{(\psi,\theta)}(b_1 b_2, b_3) \sigma_{b_3}(\tau_2^{(\psi,\theta)}(b_1, b_2)).\label{z2 condn2}
	\end{equation}
If $a\in A$ and $b_1,b_2\in B$, then we see that
	\begin{eqnarray}
		\rho^{(\psi,\theta)}(\beta_{b_2}(a), b_1)\nu_{b_1}(\rho^{(\psi,\theta)}(a,b_2))&=&\theta_1^{-1}\big(\rho(\psi_1(\beta_{b_2}(a)),\psi_2(b_1))\big)\nu_{b_1}\big(\theta_1^{-1}(\rho(\psi_1(a),\psi_2(b_2)))\big)\nonumber\\
		&=&\theta_1^{-1}\big(\rho(\beta_{\psi_2(b_2)}(\psi_1(a)),\psi_2(b_1))\big)\theta_1^{-1}\big(\nu_{\psi_2(b_1)}(\rho(\psi_1(a),\psi_2(b_2)))\big),\nonumber\\
		&&\text{since }\psi\in \Aut(\mathcal{A})\text{ and from }\eqref{act stb}\nonumber \\
		&=&\theta_1^{-1}\big(\rho(\beta_{\psi_2(b_2)}(\psi_1(a)),\psi_2(b_1))\nu_{\psi_2(b_1)}(\rho(\psi_1(a),\psi_2(b_2)))\big)\nonumber\\
		&=&\theta_1^{-1}\big(\rho(\psi_1(a), \psi_2(b_1)\psi_2( b_2)) \, \nu_{\psi_2(b_1 )\psi_2(b_2)}( f(\tau_2(\psi_2(b_1), \psi_2(b_2)), \psi_1(a)))\big),\nonumber\\
		&&\text{using }\eqref{cocycle3}\nonumber\\
		&=&\theta_1^{-1}\big(\rho(\psi_1(a), \psi_2(b_1 b_2))\big)\nu_{b_1b_2}\big(\theta_1^{-1}(f(\tau_2(\psi_2(b_1), \psi_2(b_2)), \psi_1(a)))\big),\nonumber\\
		&&\text{using }\eqref{act stb}\nonumber\\
		&=&\theta_1^{-1}\big(\rho(\psi_1(a), \psi_2(b_1 b_2))\big)\nu_{b_1b_2}\big(\theta_1^{-1}(f(\theta_2(\theta_2^{-1}(\tau_2(\psi_2(b_1), \psi_2(b_2)))), \psi_1(a)))\big)\nonumber\\
		&=&\theta_1^{-1}\big(\rho(\psi_1(a), \psi_2(b_1 b_2))\big)\nu_{b_1b_2}\big(f(\theta_2^{-1}(\tau_2(\psi_2(b_1), \psi_2(b_2))), a)\big),\nonumber\\
		&&\text{using \eqref{act stb}}\nonumber\\
		&=&\rho^{(\psi,\theta)}(a, b_1 b_2) \, \nu_{b_1 b_2}\big( f(\tau_2^{(\psi,\theta)}(b_1, b_2), a)\big). \label{z2 condn3}
	\end{eqnarray}
Similarly, for $a_1, a_2\in A$ and $b\in B$, we have
	\begin{eqnarray}
		\rho^{(\psi,\theta)}(a_1 a_2, b) \, \nu_{b}(\tau_1^{(\psi,\theta)}(a_1, a_2)) &=&\theta_1^{-1}\big(\rho(\psi_1(a_1a_2),\psi_2(b))\big)\nu_b\big(\theta_1^{-1}(\tau_1(\psi_1(a_1),\psi_1(a_2)))\big)\nonumber\\
		&=&\theta_1^{-1}\big(\rho(\psi_1(a_1)\psi_1(a_2),\psi_2(b))\big)\theta_1^{-1}\big(\nu_{\psi_2(b)}(\tau_1(\psi_1(a_1),\psi_1(a_2)))\big),\nonumber\\
		&&\text{using }\eqref{act stb}\nonumber\\
	&=&\theta_1^{-1}\big(\rho(\psi_1(a_1)\psi_1(a_2),\psi_2(b))\nu_{\psi_2(b)}(\tau_1(\psi_1(a_1),\psi_1(a_2)))\big),\nonumber\\
	&&	\text{from }\eqref{cocycle4}\nonumber\\
		&=&\theta_1^{-1}\big(\mu_{ \beta_{\psi_2(b)}(\psi_1(a_2))}(\rho(\psi_1(a_1),\psi_2(b)))\; \rho(\psi_1(a_2),\psi_2( b))\nonumber \\
		&&  \tau_1(\beta_{\psi_2(b)}(\psi_1(a_1)), \beta_{\psi_2(b)}(\psi_1(a_2)))\big)\nonumber\\
	&=&\mu_{ \beta_{b}(a_2)}\big(\theta_1^{-1}(\rho(\psi_1(a_1),\psi_2(b)))\big)\; \theta_1^{-1}(\rho(\psi_1(a_2),\psi_2( b))) \; \nonumber\\
	&&\theta_1^{-1}( \tau_1(\psi_1(\beta_{b}(a_1)), \psi_1(\beta_{b}(a_2)))),\nonumber\\
		&&\text{using }\eqref{act stb}\nonumber\\
		&=&\mu_{ \beta_{b}(a_2)}(\rho^{(\psi,\theta)}(a_1,b))\; \rho^{(\psi,\theta)}(a_2, b) \;  \tau_1^{(\psi,\theta)}(\beta_{b}(a_1), \beta_{b}(a_2)).\label{z2 condn4}
	\end{eqnarray}
	Observe that,
	\begin{eqnarray*}
		\psi_1(a_1\circ_T a_2)&=&\psi_1(a_1\beta_{T(a_1)}(a_2))\\
		&=&\psi_1(a_1)\psi_1(\beta_{T(a_1)}(a_2))\\
		&=&	\psi_1(a_1)\beta_{\psi_2(T(a_1))}(\psi_1(a_2)),\\
		&&\text{as }(\psi_1,\psi_2)\text{ is an automorphism of }\mathcal{A}\\
	\end{eqnarray*}
	\begin{eqnarray}
		&=&	\psi_1(a_1)\beta_{T(\psi_1(a_1))}(\psi_1(a_2))\nonumber\\
		&=&\psi_1(a_1)\circ_T\psi_1(a_2)\label{psi_1 condn}
	\end{eqnarray}
	for all $a_1,a_2\in A.$ Additionally, we see that
	\begin{eqnarray}
 && \tau_2^{(\psi,\theta)}(T(a_1), T(a_2) ) \delta^1_{\sigma}(\chi^{(\psi,\theta)})(a_1, a_2)\\
 &=&\theta_2^{-1}\big(\tau_2(\psi_2(T(a_1)),\psi_2(T(a_2)))\big)\theta_2^{-1}\big(\chi(\psi_1(a_1))\big)\nonumber\\
		&&\big(\theta_2^{-1}\big(\chi(\psi(a_1\circ_T a_2))\big)\big)^{-1}\sigma_{T(a_2)}\big(\theta_2^{-1}(\chi(\psi_1(a_1)))\big)\nonumber\\
		&=&\theta_2^{-1}\big(\tau_2(T(\psi_1(a_1)),T(\psi_1(a_2)))\chi(\psi_1(a_1))\big)\nonumber\\
		&&\big(\theta_2^{-1}\big(\chi(\psi(a_1)\circ_T\psi_1( a_2))\big)\big)^{-1}\theta_2^{-1}\big(\sigma_{\psi_2(T(a_2))}(\chi(\psi_1(a_1)))\big),\nonumber\\
		&&\text{since }(\psi_1,\psi_2)\in \Aut(\mathcal{A})\text{ and using }\eqref{act stb},\eqref{psi_1 condn}\nonumber\\
		&=&\theta_2^{-1}\big(\tau_2(T(\psi_1(a_1)),T(\psi_1(a_2)))\chi(\psi_1(a_1))\big)\nonumber\\
		&&\big(\theta_2^{-1}\big(\chi(\psi(a_1)\circ_T\psi_1( a_2))\big)\big)^{-1}\theta_2^{-1}\big(\sigma_{T(\psi_1(a_2))}(\chi(\psi_1(a_1)))\big),\nonumber\\
		&&\text{using }\eqref{act stb}\nonumber\\
		&=&\theta_2^{-1}\big(S \big(\nu^{-1}_{T(\psi_1(a_1) \circ_T \psi_1(a_2))}\big(\rho(\psi_1(a_2), T(\psi_1(a_1)))  \,\nonumber \\
		&&\tau_1(\psi_1(a_1),\beta_{T(\psi_1(a_1))}\psi_1((a_2)))  \,\nu_{T(\psi_1(a_1))}(f(\chi(\psi_1(a_1)), \psi_1(a_2)))  \big)\big)\big)\nonumber\\
		&=&S\big(\theta_1^{-1}\big(\nu^{-1}_{T(\psi_1(a_1 \circ_T a_2))}(\rho(\psi_1(a_2), \psi_2(T(a_1))))\big)   \nonumber\\
		&&\theta_1^{-1}\big(\tau_1(\psi_1(a_1),\beta_{\psi_2(T(a_1))}\psi_1((a_2)))\big)\theta_1^{-1}\big(\nu_{\psi_2(T(a_1))}(f(\chi(\psi_1(a_1)), \psi_1(a_2)))\big)\big)\nonumber\\
	&=&S\big(\nu^{-1}_{a_1 \circ_T a_2}\big(\theta_1^{-1}(\rho(\psi_1(a_2), \psi_2(T(a_1))))\big)   \theta_1^{-1}\big(\tau_1(\psi_1(a_1),\psi_1(\beta_{T(a_1)}(a_2)))\big)\nonumber\\
		&&\nu_{T(a_1)}\big(\theta_1^{-1}(f(\theta_2(\theta_2^{-1}(\chi(\psi_1(a_1)))), \psi_1(a_2)))\big)\big)\nonumber\\
		&=&S\big(\nu^{-1}_{a_1 \circ_T a_2}\big(\theta_1^{-1}(\rho(\psi_1(a_2), \psi_2(T(a_1))))\big)   \theta_1^{-1}\big(\tau_1(\psi_1(a_1),\psi_1(\beta_{T(a_1)}(a_2)))\big)\nonumber\\
		&&\nu_{T(a_1)}\big(f(\theta_2^{-1}(\chi(\psi_1(a_1))), \psi_1(a_2))\big)\big)\nonumber\\
		&=&S \big(\nu^{-1}_{T(a_1 \circ_T a_2)}\big(\rho^{(\psi,\theta)}(a_2, T(a_1))  \, \tau_1^{(\psi,\theta)}(a_1,\beta_{T(a_1)}(a_2))\nonumber \\
		&&\,\nu_{T(a_1)}(f(\chi^{(\psi,\theta)}(a_1), a_2))  \big)\big).\label{z2 condn5}
	\end{eqnarray}
	The equations \eqref{z2 condn1},\eqref{z2 condn2},\eqref{z2 condn3},\eqref{z2 condn4} and \eqref{z2 condn5} prove that $\big(\tau_1^{(\psi, \theta)}, \tau_2^{(\psi, \theta)},\rho^{(\psi, \theta)},\chi^{(\psi, \theta)}\big) \in \Z^2_{RRB}(\mathcal{A},\mathcal{K})$, which is assertion (1).
	\par
	
Let $(\tau_1,\tau_2,\rho,\chi)\in \B^2_{RRB}(\mathcal{A},\mathcal{K})$. Then there exists $(\kappa_1,\kappa_2)\in C^{1}_{RRB} = C^1(A, K) \oplus C^1(B,L)$ such that $$\partial_{RRB}^1(\kappa_1, \kappa_2)=(\partial_{\mu}^1(\kappa_1), \partial_{\sigma}^1(\kappa_2), \lambda_1, \lambda_2  )=(\tau_1,\tau_2,\rho,\chi). $$
	We show that $\partial_{RRB}^1(\theta_1^{-1}\kappa_1\psi_1,\theta_2^{-1}\kappa_2\psi_2)=\big(\tau_1^{(\psi, \theta)}, \tau_2^{(\psi, \theta)},\rho^{(\psi, \theta)},\chi^{(\psi, \theta)}\big).$ For $a_1,a_2\in A,$ we have $$\tau_1(a_1,a_2)=\kappa_1(a_2)\kappa_1(a_1a_2)^{-1}\mu_{a_2}(\kappa_1(a_1))$$
and
	\begin{eqnarray}
		\tau_1^{(\psi, \theta)}(a_1, a_2)&=&\theta_1^{-1}\big(\tau_1(\psi_1(a_1), \psi_1(a_2))\big)\nonumber\\
		&=&\theta_1^{-1}\big(\kappa_1(\psi_1(a_2))\kappa_1(\psi_1(a_1)\psi_1(a_2))^{-1}\mu_{\psi_1(a_2)}\big(\kappa_1(\psi_1(a_1))\big)\big)\nonumber\\
		&=&\theta_1^{-1}\big(\kappa_1(\psi_1(a_2))\big)(\theta_1^{-1}\big(\kappa_1(\psi_1(a_1)\psi_1(a_2)))\big)^{-1}\theta_1^{-1}\big(\mu_{\psi_1(a_2)}\big(\kappa_1(\psi_1(a_1))\big)\big)\nonumber
	\end{eqnarray}
\begin{eqnarray}
		&=&\theta_1^{-1}\big(\kappa_1(\psi_1(a_2))\big)(\theta_1^{-1}\big(\kappa_1(\psi_1(a_1a_2)))\big)^{-1}\mu_{a_2}\big(\theta_1^{-1}(\kappa_1(\psi_1(a_1))\big)\big),\nonumber\\
		&&\text{from \eqref{act stb}}\nonumber\\
		&=&\partial_{\mu}(\theta_1^{-1}\kappa_1\psi_1)(a_1,a_2).\label{b2condn1}
	\end{eqnarray}
	Similarly, we can prove that 
	\begin{equation}\label{b2condn2}
\tau_2^{(\psi, \theta)}(b_1, b_2)=\partial_{\sigma}(\theta_2^{-1}\kappa_2\psi_2)(b_1,b_2).
	\end{equation}
	Also, for $a\in A$ and $b\in B$, we compute
	\begin{eqnarray}
		\rho^{(\psi, \theta)}(a,b)&=&\theta_1^{-1}(\rho(\psi_1(a),\psi_2(b)))\nonumber\\
		&=&\theta_1^{-1}\big(\nu_{\psi_2(b)} (f(\kappa_2(\psi_2(b)), \psi_1(a))\kappa_1(\psi_1(a)) )\, (\kappa_1(\beta_{\psi_2(b)}(\psi_1(a))))^{-1}\big)\nonumber\\
		&=&\theta_1^{-1}\big(\nu_{\psi_2(b)} (f(\kappa_2(\psi_2(b)), \psi_1(a))\kappa_1(\psi_1(a)) )\big)\,
		\big(	\theta_1^{-1}(\kappa_1(\psi_1(\beta_b(a))))\big)^{-1},\nonumber\\
		&&\text{as $(\psi_1,\psi_2)\in \Aut(\mathcal{A})$ and $\theta_1$ is a group automorphism of }K\nonumber\\
		&=&\theta_1^{-1}\big(\theta_1\nu_{b}\theta_1^{-1} (f(\kappa_2(\psi_2(b)), \psi_1(a))\kappa_1(\psi_1(a)) )\big)\,
		\big(	\theta_1^{-1}(\kappa_1(\psi_1(\beta_b(a))))\big)^{-1},\nonumber\\
		&&\text{using \eqref{act stb}}\nonumber\\
		&=&\nu_{b}\big(\theta_1^{-1} (f(\kappa_2(\psi_2(b)), \psi_1(a)))\theta_1^{-1}(\kappa_1(\psi_1(a)) )\big)\,
		\big(	\theta_1^{-1}(\kappa_1(\psi_1(\beta_b(a))))\big)^{-1}\nonumber\\
		&=&\nu_{b}\big(\theta_1^{-1} (f(\theta_2\theta_2^{-1}\kappa_2(\psi_2(b)), \psi_1(a)))\theta_1^{-1}(\kappa_1(\psi_1(a)) )\big)\,
		\big(	\theta_1^{-1}(\kappa_1(\psi_1(\beta_b(a))))\big)^{-1}\nonumber\\
		&=&\nu_{b}\big(f(\theta_2^{-1}\kappa_2(\psi_2(b)), a)\theta_1^{-1}(\kappa_1(\psi_1(a)) )\big)\,
		\big(	\theta_1^{-1}(\kappa_1(\psi_1(\beta_b(a))))\big)^{-1},\label{b2condn3}\\
		&&\text{using }\eqref{act stb}.\nonumber
	\end{eqnarray}
Similarly, for $a\in A$, we observe
	\begin{eqnarray}
		\chi^{(\psi, \theta)}(a)&=&\theta_2^{-1}(\chi(\psi_1(a)))\nonumber\\
		&=&\theta_2^{-1}\big( S \big(\nu^{-1}_{T(\psi_1(a))}(\kappa_1(\psi_1(a))) \big)\big) \, \theta_2^{-1}\big(\big(\kappa_2(T(\psi_1(a))) \big)^{-1}\big)\nonumber\\
		&=&S\big( \theta_1^{-1} \big(\nu^{-1}_{T(\psi_1(a))}(\kappa_1(\psi_1(a))) \big)\big) \, \theta_2^{-1}\big(\big(\kappa_2(T(\psi_1(a))) \big)^{-1}\big)\nonumber\\
		&=&S\big( \theta_1^{-1} \big(\theta_1\nu^{-1}_{T(a)}\theta_1^{-1}(\kappa_1(\psi_1(a))) \big)\big) \, \big(\theta_2^{-1}\kappa_2\psi_2(T(a))\big)^{-1},\nonumber\\
		&&\text{using }\eqref{act stb}\nonumber\\
		&=&S\big( \nu^{-1}_{T(a)}(\theta_1^{-1}\kappa_1\psi_1(a)) \big) \, \big(\theta_2^{-1}\kappa_2\psi_2(T(a))\big)^{-1}.\label{b2condn4}	\end{eqnarray}
Hence, the equations \eqref{b2condn1},\eqref{b2condn2},\eqref{b2condn3} and \eqref{b2condn4} prove that $\big(\tau_1^{(\psi, \theta)}, \tau_2^{(\psi, \theta)},\rho^{(\psi, \theta)},\chi^{(\psi, \theta)}\big) \in \B^2_{RRB}(\mathcal{A},\mathcal{K})$, which is assertion (2).
\end{proof}

\begin{remark}
 The action of $\C_{(\nu,\mu,\sigma,f)}$ on $\Ho_{RRB}^2(\mathcal{A}, \mathcal{K})$, as outlined in the preceding lemma, can be transferred on $\Ext_{(\nu,\mu,\sigma,f)}(\mathcal{A}, \mathcal{K})$ via the bijection established in Theorem \eqref{ext and cohom bijection}. In fact, the action of $\C_{(\nu,\mu,\sigma,f)}$ on $\Ext_{(\nu,\mu,\sigma,f)}(\mathcal{A}, \mathcal{K})$ corresponds precisely to the action defined in \eqref{act1 RRB}.
\end{remark}

We now consider the action of $\Ho_{RRB}^2(\mathcal{A}, \mathcal{K})$ on itself through right translation, which is obviously faithful and transitive. Once again employing Theorem \ref{ext and cohom bijection}, we can transfer this action on $\Ext_{(\nu,\mu,\sigma,f)}(\mathcal{A}, \mathcal{K})=\{ [\mathcal{E}(\tau_1,\tau_2,\rho,\chi)] \mid (\tau_1,\tau_2,\rho,\chi) \in \Z^2_{RRB}(\mathcal{A},\mathcal{K})\}$. More precisely, for $[(\tau^\prime_1,\tau^\prime_2,\rho^\prime,\chi^\prime)] \in \Ho_{RRB}^2(\mathcal{A},\mathcal{K})$, the action is defined as $$[\mathcal{E}(\tau_1,\tau_2,\rho,\chi)]^{[(\tau^\prime_1,\tau^\prime_2,\rho^\prime,\chi^\prime)]} = [\mathcal{E}(\tau_1 \tau^\prime_1,\tau_2 \tau^\prime_2,\rho \rho^\prime, \chi \chi^\prime)]$$ for all $[\mathcal{E}(\tau_1,\tau_2,\rho,\chi)] \in \Ext_{(\nu,\mu,\sigma,f)}(\mathcal{A}, \mathcal{K})$. Notably, this action is also faithful and transitive.

Consider the semidirect product $\Gamma:= \C_{(\nu,\mu,\sigma,f)} \ltimes \Ho_{RRB}^2(\mathcal{A}, \mathcal{K})$ under the action defined in Lemma \ref{lemma-act3 RRB}.  Our next step is to define an action of $\Gamma$  on $\Ext_{(\nu,\mu,\sigma,f)}(\mathcal{A}, \mathcal{K})$. For $(c, h) \in  \Gamma$ and $[\mathcal{E}] \in \Ext_{(\nu,\mu,\sigma,f)}(\mathcal{A}, \mathcal{K})$, we define
\begin{align}\label{act 4 RRB}
[\mathcal{E}]^{(c,h)}:= ([\mathcal{E}]^c)^h.
\end{align}

\begin{lemma}
The rule defined in \eqref{act 4 RRB} gives an action of the group $\Gamma$ on the set $\Ext_{(\nu,\mu,\sigma,f)}(\mathcal{A}, \mathcal{K})$.
\end{lemma}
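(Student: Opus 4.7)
The plan is to verify the two axioms of a right action, namely that the identity of $\Gamma$ fixes $[\mathcal{E}]$ and that the action is compatible with the group operation in $\Gamma$. The identity axiom is immediate from the definition \eqref{act 4 RRB}, since both component actions individually fix $[\mathcal{E}]$ when applied by identity elements. So the entire content of the lemma is the composition axiom $[\mathcal{E}]^{(c_1,h_1)(c_2,h_2)} = \big([\mathcal{E}]^{(c_1,h_1)}\big)^{(c_2,h_2)}$.

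Next, I would record the multiplication in the semidirect product. Since by Lemma \ref{lemma-act3 RRB} the group $\C_{(\nu,\mu,\sigma,f)}$ acts on the group $\Ho^2_{RRB}(\mathcal{A},\mathcal{K})$ on the right by automorphisms, the multiplication in $\Gamma = \C_{(\nu,\mu,\sigma,f)} \ltimes \Ho^2_{RRB}(\mathcal{A},\mathcal{K})$ takes the form $(c_1, h_1)(c_2, h_2) = (c_1 c_2,\; h_1^{c_2} h_2)$. Unfolding both sides of the desired equality using \eqref{act 4 RRB}, together with the fact that each of the two constituent actions is already a right action, the left-hand side becomes $((([\mathcal{E}]^{c_1})^{c_2})^{h_1^{c_2}})^{h_2}$, while the right-hand side becomes $((([\mathcal{E}]^{c_1})^{h_1})^{c_2})^{h_2}$. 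Cancelling the outermost $h_2$ and writing $[\mathcal{F}] := [\mathcal{E}]^{c_1}$, the claim reduces to the single compatibility
$$\big([\mathcal{F}]^{c_2}\big)^{h_1^{c_2}} \;=\; \big([\mathcal{F}]^{h_1}\big)^{c_2}.$$

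The final step is to establish this compatibility by transferring it to $\Ho^2_{RRB}(\mathcal{A},\mathcal{K})$ through the bijection of Theorem \ref{ext and cohom bijection} together with the remark after Lemma \ref{lemma-act3 RRB}, which identifies the action of $\C_{(\nu,\mu,\sigma,f)}$ on $\Ext_{(\nu,\mu,\sigma,f)}(\mathcal{A},\mathcal{K})$ with its action on $\Ho^2_{RRB}(\mathcal{A},\mathcal{K})$, and identifies the right-translation action of $\Ho^2_{RRB}(\mathcal{A},\mathcal{K})$ on itself with its action on $\Ext_{(\nu,\mu,\sigma,f)}(\mathcal{A},\mathcal{K})$. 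If $[\omega]$ corresponds to $[\mathcal{F}]$, then $([\mathcal{F}]^{c_2})^{h_1^{c_2}}$ corresponds to $[\omega]^{c_2} \cdot h_1^{c_2}$ and $([\mathcal{F}]^{h_1})^{c_2}$ corresponds to $([\omega]\cdot h_1)^{c_2}$, where the dot denotes the group operation in $\Ho^2_{RRB}(\mathcal{A},\mathcal{K})$. These two classes agree precisely because $c_2$ acts on $\Ho^2_{RRB}(\mathcal{A},\mathcal{K})$ by a group automorphism, which is exactly the content of Lemma \ref{lemma-act3 RRB}. The main (and essentially only) obstacle is getting the right-action conventions for the semidirect product straight; once these are pinned down, the substantive work is already done in the preceding lemma and the remainder is formal.
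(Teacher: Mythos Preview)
Your proposal is correct and follows essentially the same route as the paper: both arguments record the semidirect-product multiplication $(c_1,h_1)(c_2,h_2)=(c_1c_2,\,h_1^{c_2}h_2)$ and reduce the composition axiom to the single compatibility $\big([\mathcal{F}]^{h}\big)^{c}=\big([\mathcal{F}]^{c}\big)^{h^{c}}$. The only difference is cosmetic: the paper verifies this compatibility by a one-line cocycle computation $[\mathcal{E}(\tau_1\tau_1',\ldots)]^c=[\mathcal{E}(\tau_1^c\tau_1'^c,\ldots)]$, whereas you transfer to $\Ho^2_{RRB}(\mathcal{A},\mathcal{K})$ via Theorem~\ref{ext and cohom bijection} and invoke the automorphism property from Lemma~\ref{lemma-act3 RRB}; these are the same verification in different language.
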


\begin{proof}
For $(c_1,h_1),  (c_2, h_2) \in \Gamma$,  we have $(c_1,h_1)(c_2,h_2)=(c_1c_2, h_1^{c_2} \, h_2)$.  So, it is enough to show that $\big([\mathcal{E}]^h\big)^c = \big([\mathcal{E}]^c\big)^{h^c}$ for each $c \in \C_{(\nu, \mu, \sigma,f)}$, $h \in \Ho^2_{RRB}(\mathcal{A},\mathcal{K})$ and $[\mathcal{E}] \in \Ext_{(\nu, \mu, \sigma,f)}(\mathcal{A},\mathcal{K})$.
	 Then, for $h=[(\tau^\prime_1,\tau^\prime_2,\rho^\prime,\chi^\prime)] \in \Ho^2_{RRB}(\mathcal{A}, \mathcal{K})$, we have
	\begin{eqnarray*}
		\big([\mathcal{E}]^{h}\big)^c &=& [\mathcal{E}^c(\tau_1 \tau^\prime_1,\tau_2 \tau^\prime_2,\rho \rho^\prime, \chi \chi^\prime)]\\
		&=&[\mathcal{E}(\tau_1^c \tau^{\prime c}_1,\tau_2^c \tau^{\prime c}e_2,\rho^c \rho^{\prime c}, \chi^c \chi^{\prime c})]\\
		&=& ([\mathcal{E}(\tau_1^c,\tau_2^c,\rho^c,\chi^c)])^{h^c}\\
		&=& \big([\mathcal{E}]^c\big)^{h^c},
	\end{eqnarray*}
which is desired.
\end{proof}
 Let $[\mathcal{E}] \in \Ext_{(\nu, \mu, \sigma,f)}(\mathcal{A},\mathcal{K})$ be a fixed abelian extension. Since the action of $\Ho^2_{RRB}(\mathcal{A},\mathcal{K})$ on $\Ext_{(\nu, \mu, \sigma,f)}(\mathcal{A},\mathcal{K})$ is transitive and faithful, for  each $c \in \C_{(\nu, \mu, \sigma,f)}$, there exists a unique element (say) $h_c$  in  $\Ho^2_{RRB}(\mathcal{A},\mathcal{K})$ such that  
$$[\mathcal{E}]^{c} = [\mathcal{E}]^{h_c}.$$
We thus have a well-defined map $ \omega(\mathcal{E}): \C_{(\nu, \mu, \sigma,f)} \rightarrow \Ho^2_{RRB}(\mathcal{A},\mathcal{K})$ given by
\begin{equation}\label{wells-map RRB}
	\omega(\mathcal{E})(c):=h_c
\end{equation}
for $c \in \C_{(\nu, \mu, \sigma,f)}$. 

\begin{lemma}\label{wells3 RRB}
	The map $ \omega(\mathcal{E}): \C_{(\nu, \mu, \sigma,f)} \rightarrow \Ho^2_{RRB}(\mathcal{A},\mathcal{K})$ given in \eqref{wells-map RRB} is a derivation with respect to the action of $\C_{(\nu, \mu, \sigma,f)}$ on $\Ho^2_{RRB}(\mathcal{A},\mathcal{K})$ given in \eqref{act3 RRB}.
\end{lemma}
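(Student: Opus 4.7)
The plan is to extract the derivation property of $\omega(\mathcal{E})$ directly from the semidirect product structure $\Gamma = \C_{(\nu,\mu,\sigma,f)} \ltimes \Ho^2_{RRB}(\mathcal{A},\mathcal{K})$ together with the faithfulness and transitivity of the $\Ho^2_{RRB}(\mathcal{A},\mathcal{K})$-action on $\Ext_{(\nu,\mu,\sigma,f)}(\mathcal{A},\mathcal{K})$. The required identity is
\[
\omega(\mathcal{E})(c_1 c_2) \;=\; \omega(\mathcal{E})(c_2)\;\omega(\mathcal{E})(c_1)^{c_2}
\]
for all $c_1, c_2 \in \C_{(\nu,\mu,\sigma,f)}$, which is the standard derivation condition for a right action.

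First I would fix $c_1, c_2 \in \C_{(\nu,\mu,\sigma,f)}$ and compute $[\mathcal{E}]^{c_1 c_2}$ in two ways. On one hand, by the definition \eqref{wells-map RRB}, we have $[\mathcal{E}]^{c_1 c_2} = [\mathcal{E}]^{h_{c_1 c_2}}$ with $h_{c_1 c_2}=\omega(\mathcal{E})(c_1 c_2)$. On the other hand, since the $\C_{(\nu,\mu,\sigma,f)}$-action on $\Ext_{(\nu,\mu,\sigma,f)}(\mathcal{A},\mathcal{K})$ is a (right) action, we may write
\[
[\mathcal{E}]^{c_1 c_2} \;=\; \bigl([\mathcal{E}]^{c_1}\bigr)^{c_2} \;=\; \bigl([\mathcal{E}]^{h_{c_1}}\bigr)^{c_2}.
\]
The crucial step is to commute the $c_2$ past $h_{c_1}$. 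Exactly as in the proof of the preceding lemma (which verifies the semidirect product action law $[\mathcal{E}]^{(c,h)}=([\mathcal{E}]^c)^h$), we have the exchange identity $\bigl([\mathcal{E}]^{h}\bigr)^{c} = \bigl([\mathcal{E}]^{c}\bigr)^{h^{c}}$ for every $c \in \C_{(\nu,\mu,\sigma,f)}$ and $h \in \Ho^2_{RRB}(\mathcal{A},\mathcal{K})$. Applying this with $h=h_{c_1}$ and $c=c_2$ yields
\[
\bigl([\mathcal{E}]^{h_{c_1}}\bigr)^{c_2} \;=\; \bigl([\mathcal{E}]^{c_2}\bigr)^{h_{c_1}^{c_2}} \;=\; \bigl([\mathcal{E}]^{h_{c_2}}\bigr)^{h_{c_1}^{c_2}} \;=\; [\mathcal{E}]^{h_{c_2}\,h_{c_1}^{c_2}},
\]
where the last equality uses that the $\Ho^2_{RRB}(\mathcal{A},\mathcal{K})$-action on $\Ext_{(\nu,\mu,\sigma,f)}(\mathcal{A},\mathcal{K})$ is by right translation.

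Finally, combining the two expressions gives $[\mathcal{E}]^{h_{c_1 c_2}} = [\mathcal{E}]^{h_{c_2}\,h_{c_1}^{c_2}}$, and since the action of $\Ho^2_{RRB}(\mathcal{A},\mathcal{K})$ on $\Ext_{(\nu,\mu,\sigma,f)}(\mathcal{A},\mathcal{K})$ is faithful, we deduce $h_{c_1 c_2} = h_{c_2}\,h_{c_1}^{c_2}$, i.e.\ $\omega(\mathcal{E})(c_1 c_2) = \omega(\mathcal{E})(c_2)\,\omega(\mathcal{E})(c_1)^{c_2}$. This is precisely the derivation property with respect to the action \eqref{act3 RRB}, and it is the content of the lemma. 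The main obstacle is purely bookkeeping: making sure the exchange identity $([\mathcal{E}]^{h})^{c} = ([\mathcal{E}]^{c})^{h^{c}}$ is invoked correctly (it is the heart of why the semidirect product structure encodes a derivation), and then observing that faithfulness turns the equality of extensions into an equality in $\Ho^2_{RRB}(\mathcal{A},\mathcal{K})$.
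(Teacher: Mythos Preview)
Your argument is correct and follows essentially the same route as the paper's proof: compute $[\mathcal{E}]^{c_1c_2}$ two ways, invoke the exchange identity $([\mathcal{E}]^{h})^{c} = ([\mathcal{E}]^{c})^{h^{c}}$, and use faithfulness of the $\Ho^2_{RRB}(\mathcal{A},\mathcal{K})$-action to conclude. The only cosmetic difference is that the paper writes the derivation identity as $h_{c_1c_2}=(h_{c_1})^{c_2}h_{c_2}$ whereas you write $h_{c_1c_2}=h_{c_2}\,h_{c_1}^{c_2}$; since $\Ho^2_{RRB}(\mathcal{A},\mathcal{K})$ is abelian these coincide.
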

\begin{proof}
	Let $c_1, c_2 \in \C_{(\nu, \mu,  \sigma,f)}$ and  $\omega(\mathcal{E})(c_1c_2) = h_{c_1c_2}$.  Thus, by the definition of $\omega(\mathcal{E})$,  we have $[\mathcal{E}]^{c_1c_2} = [\mathcal{E}]^{h_{c_1c_2}}$.  Using the fact that  $\big([\mathcal{E}]^{h}\big)^{c} = \big([\mathcal{E}]^{c}\big)^{h^c}$ for each $c \in \C_{(\nu, \mu, \sigma,f)}$, $h \in \Ho^2_{RRB}(\mathcal{A},\mathcal{K})$, we have
	\begin{eqnarray*}
		[\mathcal{E}]^{h_{c_1c_2}} & = &[\mathcal{E}]^{(c_1c_2)}\\
		&=& \big([\mathcal{E}]^{c_1}\big)^{c_2}\\
		&=& \big([\mathcal{E}]^{h_{c_1}}\big)^{c_2}\\
		&= & \big([\mathcal{E}]^{c_2}\big)^{(h_{c_1})^{c_2}}\\
		&=& \big([\mathcal{E}]^{h_{c_2}}\big)^{(h_{c_1})^{c_2}}\\
		&=& [\mathcal{E}]^{\big(h_{c_2} (h_{c_1})^{c_2} \big)}.
	\end{eqnarray*}
	Since the action of $\Ho^2_{RRB}(\mathcal{A},\mathcal{K})$ on $\Ext_{(\nu, \mu, \sigma,f)}(\mathcal{A},\mathcal{K})$  is faithful,  it follows that $h_{c_1c_2} = (h_{c_1})^{c_2}  h_{c_2}$. This implies that $\omega(\mathcal{E})(c_1c_2)=\big(\omega(\mathcal{E})(c_1)\big)^{c_2}\omega(\mathcal{E})(c_2)$, and hence $\omega(\mathcal{E})$ is a derivation. 
\end{proof}
Let 
$$\mathcal{E} : \quad  {\bf 1} \longrightarrow \mathcal{K}=(K,L, \alpha,S ) \stackrel{(i_1, i_2)}{\longrightarrow}  \mathcal{H}=(H,G, \phi, R) \stackrel{(\pi_1, \pi_2)}{\longrightarrow}\mathcal{A}= (A,B, \beta, T) \longrightarrow {\bf 1},$$
be an abelian extension of a relative Rota--Baxter group $\mathcal{A}$ by a trivial relative Rota--Baxter group $\mathcal{K}$ such that $[\mathcal{E}] \in \Ext_{(\nu, \mu, \sigma,f)}(\mathcal{A},\mathcal{K})$.
Let  $\Aut_{\mathcal{K}}(\mathcal{H})$ denote the subgroup of $\Aut(\mathcal{H})$ consisting of all automorphisms of $\mathcal{H}$ which normalize $\mathcal{K}$, that is,
$$\Aut_{\mathcal{K}}(\mathcal{H}) := \{ \gamma=(\gamma_1,\gamma_2) \in \Aut(\mathcal{H}) \mid \gamma_1(k) \in K \mbox{ for all }  k \in K\mbox{ and } \gamma_2(l) \in L \mbox{ for all }  l \in L\}.$$ 
For $\gamma \in \Aut_{\mathcal{K}}(\mathcal{H})$, we set $\gamma_{\mathcal{K}} := (\gamma_1 |_K,\gamma_2|_L)$, the restriction of $\gamma$ to $\mathcal{K}$, and $\gamma_{\mathcal{A}}=(\gamma_{1_A},\gamma_{2_B})$ to be the automorphism of $\mathcal{A}$ induced by $\gamma$. More precisely, $\gamma_{1_A}(a) = \pi_1(\gamma_1(s_H(a)))$ for all $a \in A$ and $\gamma_{2_B}(b) = \pi_2(\gamma_2(s_G(b)))$ for all $b\in B$, where $(s_H,s_G)$ is a set-theoretic section of $(\pi_1,\pi_2)$. Note that, the definition of $\gamma_\mathcal{A}$ is independent of  the choice of a set-theoretic section. Define a map $\rho(\mathcal{E}) :  \Aut_{\mathcal{K}}(\mathcal{H}) \rightarrow  \Aut(\mathcal{A}) \times \Aut(\mathcal{K})$ by
$$\rho(\mathcal{E})(\gamma)=(\gamma_{\mathcal{A}}, \gamma_\mathcal{K}).$$ 
Although $\omega(\mathcal{E})$ is not a homomorphism, but we can still talk about its set-theoretic kernel, that is,
$$\Ker(\omega(\mathcal{E})) = \{c \in C_{(\nu,\mu,\sigma,f)} \mid [\mathcal{E}]^c=[\mathcal{E}]\}.$$

\begin{prop}\label{wells4 RRB}
If $\mathcal{E}$ is an extension with induced action $(\nu, \mu, \sigma,f)$, then  	$\IM(\rho(\mathcal{E})) = \Ker(\omega(\mathcal{E}))$.
\end{prop}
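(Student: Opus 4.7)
The plan is to establish the two set-inclusions $\IM(\rho(\mathcal{E})) \subseteq \Ker(\omega(\mathcal{E}))$ and $\Ker(\omega(\mathcal{E})) \subseteq \IM(\rho(\mathcal{E}))$ separately. By definition of $\omega(\mathcal{E})$, an element $c \in \C_{(\nu,\mu,\sigma,f)}$ lies in $\Ker(\omega(\mathcal{E}))$ if and only if $[\mathcal{E}]^{c} = [\mathcal{E}]$, that is, if and only if $\mathcal{E}$ and $\mathcal{E}^{c}$ are equivalent extensions. Both inclusions are therefore questions about producing equivalences of extensions.

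For the forward inclusion, let $\gamma = (\gamma_1,\gamma_2) \in \Aut_{\mathcal{K}}(\mathcal{H})$ and set $(\psi,\theta) := (\gamma_{\mathcal{A}}, \gamma_{\mathcal{K}}) = \rho(\mathcal{E})(\gamma)$. I first verify that $(\psi,\theta) \in \C_{(\nu,\mu,\sigma,f)}$ by checking the four compatibility conditions in \eqref{act stb}. Writing $\gamma_2(s_G(b)) = s_G(\psi_2(b))\,\ell$ for some $\ell \in L$ (which is forced by $\pi_2\gamma_2 = \psi_2\pi_2$) and using that in an abelian extension $\phi|_L = \alpha$ is trivial, one obtains
$$\theta_1(\nu_b(k)) = \gamma_1(\phi_{s_G(b)}(k)) = \phi_{\gamma_2(s_G(b))}(\gamma_1(k)) = \phi_{s_G(\psi_2(b))}(\theta_1(k)) = \nu_{\psi_2(b)}(\theta_1(k)).$$
The three remaining conditions for $\mu$, $\sigma$ and $f$ are obtained by analogous manipulations, where abelianness of $K$ and $L$ kills the conjugation by the auxiliary element of $K$ (respectively $L$) that arises from replacing $\gamma_1(s_H(a))$ with $s_H(\psi_1(a))$ (respectively $\gamma_2(s_G(b))$ with $s_G(\psi_2(b))$). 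Having established $(\psi,\theta) \in \C_{(\nu,\mu,\sigma,f)}$, I exhibit $\gamma$ itself as an equivalence of extensions between $\mathcal{E}$ and $\mathcal{E}^{(\psi,\theta)}$: the relation $\gamma \circ (i_1,i_2) = (\theta_1,\theta_2)$ holds by the very definition of $\gamma_{\mathcal{K}}$, and $(\psi_1^{-1}\pi_1,\psi_2^{-1}\pi_2) \circ \gamma = (\pi_1,\pi_2)$ is immediate from the identities $\pi_1\gamma_1 = \psi_1\pi_1$ and $\pi_2\gamma_2 = \psi_2\pi_2$ (which one deduces by evaluating on a section and on $K$, $L$ separately). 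Thus $[\mathcal{E}]^{(\psi,\theta)} = [\mathcal{E}]$, so $(\psi,\theta) \in \Ker(\omega(\mathcal{E}))$.

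For the reverse inclusion, suppose $(\psi,\theta) \in \C_{(\nu,\mu,\sigma,f)}$ satisfies $[\mathcal{E}]^{(\psi,\theta)} = [\mathcal{E}]$. Then there exists an isomorphism $\gamma = (\gamma_1,\gamma_2)\colon \mathcal{H} \to \mathcal{H}$ of relative Rota--Baxter groups fitting into the equivalence diagram, and hence satisfying $\gamma_1|_K = \theta_1$, $\gamma_2|_L = \theta_2$, $\psi_1^{-1}\pi_1\gamma_1 = \pi_1$ and $\psi_2^{-1}\pi_2\gamma_2 = \pi_2$. The first two equalities force $\gamma_1(K) = K$ and $\gamma_2(L) = L$, so $\gamma \in \Aut_{\mathcal{K}}(\mathcal{H})$ and $\gamma_{\mathcal{K}} = \theta$. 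The last two equalities, together with the definition $(\gamma_{\mathcal{A}})_1(a) = \pi_1(\gamma_1(s_H(a)))$ and its analogue for the $G$-component, give $\gamma_{\mathcal{A}} = \psi$. Consequently $\rho(\mathcal{E})(\gamma) = (\psi,\theta)$, completing the inclusion.

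The main technical obstacle is the verification, in the forward direction, that $(\gamma_{\mathcal{A}}, \gamma_{\mathcal{K}})$ actually lies in the stabilizer $\C_{(\nu,\mu,\sigma,f)}$ rather than just in $\Aut(\mathcal{A}) \times \Aut(\mathcal{K})$. This is where one must carefully manage the discrepancy between $\gamma_1(s_H(a))$ and $s_H(\psi_1(a))$ (and the analogous discrepancy at the level of $G$): the arguments work only because $\mathcal{E}$ is abelian, so that the auxiliary $K$-valued (resp.\ $L$-valued) factors are central and the action of $L$ on $K$ is trivial.
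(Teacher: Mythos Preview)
Your proposal is correct and follows essentially the same approach as the paper: both prove $\IM(\rho(\mathcal{E})) \subseteq \C_{(\nu,\mu,\sigma,f)}$ by the section-discrepancy computation you describe, then exhibit $\gamma$ itself as the equivalence $\mathcal{E} \simeq \mathcal{E}^{(\gamma_{\mathcal{A}},\gamma_{\mathcal{K}})}$ for the forward inclusion, and read off $\gamma$ from the equivalence diagram for the reverse inclusion. The only cosmetic difference is that the paper writes out the $f$-compatibility in full while you (like the paper for $\mu$ and $\sigma$) defer it to ``analogous manipulations''.
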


\begin{proof}
First, we show that $\IM(\rho(\mathcal{E}))\subseteq C_{(\nu,\mu,\sigma,f)}$. 
For $\gamma \in \Aut_{\mathcal{K}}(\mathcal{H})$, we have $\rho(\mathcal{E})=(\gamma_{\mathcal{A}}, \gamma_\mathcal{K}).$ We are required  to show that $\nu_{b} = \gamma_1|_K^{-1} \nu_{\gamma_{2_B}(b)} \gamma_1|_K$, $\mu_{a} = \gamma_1|_K^{-1} \mu_{\gamma_{1_A}(a)} \gamma_1|_K$,  $\sigma_{b} = \gamma_2|_L^{-1}\sigma_{\gamma_{2_B}(b)} \gamma_2|_L$ and $\gamma_1|_K(f(l,a))=f(\gamma_2|_L(l),\gamma_{1_A}(a))$ for all $a \in A, b\in B$. Let $(s_H,s_G)$ be an st-section of $(\pi_1,\pi_2)$  and elements $ h\in H$, $g\in G$. Notice that $\gamma_1|_K^{-1}$ is the restriction of $\gamma_1^{-1}$ on $K$ and $\gamma_2|_L^{-1}$ is the restriction of $\gamma_2^{-1}$ on $L$. Also, for $g \in G$,  $s_G(\pi_2(g))=g l_b$ for some $l_g \in L.$ We have
\begin{eqnarray*}
\gamma_1|_K^{-1} \nu_{\gamma_{2_B}(b)} \gamma_1|_K(k)&=& \gamma_1|_K^{-1} \nu_{\gamma_{2_B}(b)}( \gamma_1(k))\\
&=& \gamma_1^{-1}( \nu_{	\pi_2(\gamma_2(s_G(b)))}( \gamma_1(k)))\\
&=& \gamma_1^{-1}( \phi_{s_G(\pi_2(\gamma_2(s_G(b))))}( \gamma_1(k)))\\
&=& \gamma_1^{-1}(\phi_{\gamma_2(s_G(b)) l_{\gamma_2(s_G(b))}}(\gamma_{1}(k)))\\
&=& \gamma_1^{-1}(\phi_{\gamma_2(s_G(b))}(\gamma_{1}(k))) \\
&=& \phi_{s_G(b)}(k)   \mbox{, since $(\gamma_{1},\gamma_2)\in \Aut(\mathcal{H})$ }\\
&=& \nu_b(k).
\end{eqnarray*}	
Similarly, we have $\mu_{a} = \gamma_1|_K^{-1} \mu_{\gamma_{1_A}(a)} \gamma_1|_K$ and  $\sigma_{b} = \gamma_2|_L^{-1}\sigma_{\gamma_{2_B}(b)} \gamma_2|_L$ for all $a \in A$ and $b \in B$. Next we prove that $\gamma_1|_K(f(l,a))=f(\gamma_2|_L(l),\gamma_{1_A}(a))$ for all $a \in A$ and $b\in B$.

\begin{eqnarray}
\gamma_1(f(l,a))&=& \gamma_1(s_H(a)^{-1} \phi_{l}(s_H(a))) \nonumber\\
&=& \gamma_1(s_H(a)^{-1} ) \gamma_1(\phi_{l}(s_H(a))) \nonumber\\
&=& \gamma_1(s_H(a)^{-1} )\phi_{\gamma_{2}(l)}(\gamma_{1}(s_H(a))). \label{fcom}
\end{eqnarray}
Notice that $\gamma_1(s_H(a))=s_H(\gamma_{1_A}(a)) k_a$ for some $k_a \in K$. Using this in \eqref{fcom}, we have
\begin{align*}
	\gamma_1(f(l,a))=& k^{-1}_a s_H( \gamma_{1_A}(a) ^{-1}) \phi_{\gamma_{2}(l)}(s_H(\gamma_{1_A}(a)) k_a)\\
	=& k^{-1}_a f (\gamma_2|_L(l),\gamma_{1_A}(a)) k_a\\
	=& f (\gamma_2|_L(l),\gamma_{1_A}(a)).
\end{align*}

	Now, let $\rho(\mathcal{E})(\gamma) =  (\gamma_{\mathcal{A}}, \gamma_{\mathcal{K}})$ for  $\gamma \in \Aut_{\mathcal{K}}(\mathcal{H})$.  We know that $s_H(\pi_1(x))=xk_x $ for some $k_x \in K$ and $s_G(\pi_2(y))=yl_y $ for some $l_y \in L$.  Thus we have
	\begin{eqnarray*}
		\gamma_{1_A}^{-1}(\pi_1(\gamma_1(s_H(a)))) &=& \pi_1\big(\gamma_{1}^{-1}(s_H(\pi_1(\gamma_1(s_H(a)))))\big)\\
		&=& \pi_1\big(\gamma_{1}^{-1}\big(\gamma_1(s_H(a))y_{\gamma_1(s_H(a))}\big)\big)\\
		&= & a
	\end{eqnarray*}
for all $a\in A$. Similarly, it can be shown that $	\gamma_{2_B}^{-1}(\pi_2(\gamma_2(s_G(b))))=b$ for all $b\in B$, which implies that the diagram
$$\begin{CD}
	\textbf{1} @>>> \mathcal{K} @>>> \mathcal{H}@>{{(\pi_1,\pi_2)} }>> \mathcal{A} @>>> \textbf{1}\\ 
	&& @V{\text{(Id,Id)}} VV @V{\gamma} VV @V{\text{(Id,Id)} }VV \\
	\textbf{1} @>>> \mathcal{K} @>{\gamma_{\mathcal{K}}}>> \mathcal{H} @>{(\gamma_{1_A}^{-1} \pi_1,\gamma_{2_B}^{-1} \pi_2)}>> \mathcal{A}  @>>> 	\textbf{1}
\end{CD}$$
commutes. Hence, we have $[(\mathcal{E})]^{(\gamma_{\mathcal{A}}, \gamma_{\mathcal{K}})} =[(\mathcal{E})]$, which shows that $\IM(\rho(\mathcal{E})) \subseteq \Ker(\omega(\mathcal{E}))$.
Conversely, if $(\psi, \theta) \in  \Ker(\omega(\mathcal{E}))$,  then there exists a relative Rota--Baxter group homomorphism $\gamma: \mathcal{H} \rightarrow \mathcal{H}$ such that the diagram 
$$\begin{CD}
	\textbf{1} @>>> \mathcal{K} @>>> \mathcal{H}@>{{(\pi_1,\pi_2)} }>> \mathcal{A} @>>> 	\textbf{1}\\ 
	&& @V{\text{(Id,Id)}} VV@V{\gamma} VV @V{\text{(Id,Id)} }VV \\
		\textbf{1} @>>> \mathcal{K} @>{\theta}>> \mathcal{H} @>{ (\psi_1^{-1} \pi_1,\psi_2^{-1} \pi_2)}>> \mathcal{A}  @>>> 	\textbf{1}
\end{CD}$$
commutes. It is now obvious that $\gamma \in \Aut(\mathcal{H})$, $\theta=(\theta_1,\theta_2) = (\gamma_1|_K,\gamma_2|_L)$ and $\psi=(\psi_1,\psi_2) = (\gamma_{1_A},\gamma_{2_B})$. Hence, we have $\rho(\mathcal{E})(\gamma)=(\psi, \theta)$, which completes the proof.  
\end{proof}

To proceed further, we set $$\Aut^{\mathcal{A}, \mathcal{K}}(\mathcal{H}) := \{\gamma \in \Aut_{\mathcal{K}}(\mathcal{H}) \mid \gamma_{\mathcal{K}}= (\Id,\Id) ~\textrm{and}~  \gamma_{\mathcal{A}}= (\Id,\Id)\}.$$ Note that,  $\Aut^{\mathcal{A}, \mathcal{K}}(\mathcal{H})$ is precisely the kernel of $\rho(\mathcal{E})$. Hence, using Proposition \ref{wells4 RRB}, we get the following result.

\begin{thm}\label{wells5 RRB}
	Let $\mathcal{E}: 	\textbf{1} \rightarrow \mathcal{K} \rightarrow \mathcal{H} \overset{(\pi_1,\pi_2)}\rightarrow \mathcal{A}\rightarrow 	\textbf{1}$ be an abelian extension of a relative Rota--Baxter group $\mathcal{A}$ by a trivial relative Rota--Baxter group $\mathcal{K}$ such that $[\mathcal{E}] \in \Ext_{(\nu, \mu, \sigma,f)}(\mathcal{A},\mathcal{K})$. Then there is an exact sequence of groups 
	$$1 \rightarrow \Aut^{\mathcal{A},\mathcal{K}}(\mathcal{H}) \rightarrow \Aut_{\mathcal{K}}(\mathcal{H}) \stackrel{\rho(\mathcal{E})}{\longrightarrow} \C_{(\nu, \mu,\sigma,f)} \stackrel{\omega(\mathcal{E})}{\longrightarrow} \Ho^2_{RRB}(\mathcal{A},\mathcal{K}),$$
	where $\omega(\mathcal{E})$ is, in general,  only a derivation.
\end{thm}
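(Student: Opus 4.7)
The plan is to assemble the exact sequence from the definitions together with Proposition \ref{wells4 RRB} and Lemma \ref{wells3 RRB}. The argument has three parts, one for each position at which exactness must be verified.

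First, I would check that $\rho(\mathcal{E}): \Aut_{\mathcal{K}}(\mathcal{H}) \to \C_{(\nu,\mu,\sigma,f)}$ is a well-defined group homomorphism. That its image actually lands in $\C_{(\nu,\mu,\sigma,f)}$ (rather than merely in $\Aut(\mathcal{A}) \times \Aut(\mathcal{K})$) was already established in the opening paragraph of the proof of Proposition \ref{wells4 RRB}, where the four defining relations $\nu_b = \gamma_1|_K^{-1} \nu_{\gamma_{2_B}(b)} \gamma_1|_K$, $\mu_a = \gamma_1|_K^{-1} \mu_{\gamma_{1_A}(a)} \gamma_1|_K$, $\sigma_b = \gamma_2|_L^{-1} \sigma_{\gamma_{2_B}(b)} \gamma_2|_L$, and $\gamma_1|_K(f(l,a)) = f(\gamma_2|_L(l),\gamma_{1_A}(a))$ were verified for each $\gamma \in \Aut_{\mathcal{K}}(\mathcal{H})$. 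The homomorphism property is then routine: restriction of automorphisms to the ideal $\mathcal{K}$ is functorial, giving $(\gamma\delta)_{\mathcal{K}} = \gamma_{\mathcal{K}} \delta_{\mathcal{K}}$, and a short computation with a set-theoretic section, combined with the fact that $\gamma_{\mathcal{A}}$ is independent of the chosen section, gives $(\gamma\delta)_{\mathcal{A}} = \gamma_{\mathcal{A}} \delta_{\mathcal{A}}$.

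Second, exactness at $\Aut_{\mathcal{K}}(\mathcal{H})$ is immediate from the definition of $\Aut^{\mathcal{A},\mathcal{K}}(\mathcal{H})$: an element $\gamma \in \Aut_{\mathcal{K}}(\mathcal{H})$ lies in $\Ker(\rho(\mathcal{E}))$ if and only if $\gamma_{\mathcal{K}} = (\Id,\Id)$ and $\gamma_{\mathcal{A}} = (\Id,\Id)$, which is exactly the condition defining $\Aut^{\mathcal{A},\mathcal{K}}(\mathcal{H})$. The leftmost arrow is then just the tautological inclusion, which is clearly injective.

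Third, exactness at $\C_{(\nu,\mu,\sigma,f)}$ is precisely the statement $\IM(\rho(\mathcal{E})) = \Ker(\omega(\mathcal{E}))$ proved in Proposition \ref{wells4 RRB}. The one subtlety worth flagging is that $\omega(\mathcal{E})$ is only a derivation (Lemma \ref{wells3 RRB}), not a group homomorphism; nevertheless, the derivation identity $\omega(\mathcal{E})(c_1 c_2) = \omega(\mathcal{E})(c_1)^{c_2} \omega(\mathcal{E})(c_2)$ ensures that the set-theoretic kernel $\{c \in \C_{(\nu,\mu,\sigma,f)} \mid \omega(\mathcal{E})(c) = 0\}$ is still a subgroup of $\C_{(\nu,\mu,\sigma,f)}$, so exactness at this position is well-defined. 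I do not expect any real obstacle, since the substantive content is already packaged inside Proposition \ref{wells4 RRB} and Lemma \ref{wells3 RRB}; the only care required is in spelling out the functoriality needed to see that $\rho(\mathcal{E})$ is a homomorphism, and in phrasing the conclusion so that the ``derivation rather than homomorphism'' nature of $\omega(\mathcal{E})$ is handled correctly.
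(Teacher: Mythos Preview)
Your proposal is correct and follows essentially the same approach as the paper: the paper simply observes that $\Aut^{\mathcal{A},\mathcal{K}}(\mathcal{H})$ is by definition the kernel of $\rho(\mathcal{E})$ and then invokes Proposition \ref{wells4 RRB} for exactness at $\C_{(\nu,\mu,\sigma,f)}$, with Lemma \ref{wells3 RRB} supplying the derivation property of $\omega(\mathcal{E})$. Your write-up is more detailed (you spell out that $\rho(\mathcal{E})$ is a homomorphism and that the set-theoretic kernel of the derivation is a subgroup), but the logical skeleton is identical.
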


We further prove

\begin{thm}\label{wells6 RRB}
	Let $\mathcal{E}: 	\textbf{1} \rightarrow \mathcal{K} \rightarrow \mathcal{H} \overset{(\pi_1,\pi_2)}\rightarrow \mathcal{A}\rightarrow 	\textbf{1}$ be an extension of $\mathcal{A}$ by $\mathcal{K}$ such that $[\mathcal{E}] \in \Ext_{(\nu, \mu, \sigma,f)}(\mathcal{A}, \mathcal{K})$.  Then  $\Aut^{\mathcal{A}, \mathcal{K}}(\mathcal{H}) \cong \Z^1_{RRB}(\mathcal{A},\mathcal{K})$.
\end{thm}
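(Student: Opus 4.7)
The plan is to construct mutually inverse group homomorphisms between $\Aut^{\mathcal{A}, \mathcal{K}}(\mathcal{H})$ and $\Z^1_{RRB}(\mathcal{A}, \mathcal{K})$ by fixing a set-theoretic section $(s_H, s_G)$ of $(\pi_1, \pi_2)$ and translating each automorphism into a pair of $1$-cochains. Given $\gamma = (\gamma_1, \gamma_2) \in \Aut^{\mathcal{A}, \mathcal{K}}(\mathcal{H})$, the condition $\gamma_{\mathcal{A}} = (\Id, \Id)$ forces $\gamma_1(s_H(a)) \in s_H(a) K$ and $\gamma_2(s_G(b)) \in s_G(b) L$, so the assignments
$$\kappa_1(a) := s_H(a)^{-1} \gamma_1(s_H(a)) \in K, \qquad \kappa_2(b) := s_G(b)^{-1} \gamma_2(s_G(b)) \in L$$
are well-defined. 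Since $\gamma_1|_K = \Id$ and every $h \in H$ has a unique expression $h = s_H(a) k$, we recover $\gamma_1(s_H(a) k) = s_H(a) \kappa_1(a) k$, and similarly $\gamma_2(s_G(b) l) = s_G(b) \kappa_2(b) l$. Thus $\gamma$ is completely determined by the pair $(\kappa_1, \kappa_2)$.

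The next step is to check that each of the four defining conditions of $\Z^1_{RRB}(\mathcal{A}, \mathcal{K})$ corresponds to one piece of the requirement that $\gamma$ is a morphism of relative Rota--Baxter groups. The homomorphism property of $\gamma_1$, applied to $s_H(a_1) s_H(a_2) = s_H(a_1 a_2) \tau_1(a_1, a_2)$ with $\tau_1$ as in \eqref{mucocycle}, combined with the formula \eqref{muact} and the commutativity of $K$, produces \eqref{der1}; the analogous argument for $\gamma_2$ yields \eqref{der2}. Applying the compatibility $\gamma_1 \phi_{s_G(b)} = \phi_{\gamma_2(s_G(b))} \gamma_1$ to $s_H(a)$, with $\gamma_2(s_G(b)) = s_G(b) \kappa_2(b)$, and expanding both sides via Proposition \ref{f rho chi eqn}(1) (once with $l = 1_L$, once with $l = \kappa_2(b)$), the factor $s_H(\beta_b(a))\rho(a,b)$ cancels using abelianness of $K$ and leaves \eqref{der3}. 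Finally, applying $\gamma_2 R = R \gamma_1$ to $s_H(a) k$ and expanding both sides through \eqref{relativecon} produces \eqref{der4}.

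For the reverse direction, given $(\kappa_1, \kappa_2) \in \Z^1_{RRB}(\mathcal{A}, \mathcal{K})$, I would define $\gamma_1(s_H(a) k) := s_H(a) \kappa_1(a) k$ and $\gamma_2(s_G(b) l) := s_G(b) \kappa_2(b) l$ using the unique decompositions of elements of $H$ and $G$. Condition \eqref{der1} then makes $\gamma_1$ a group homomorphism, \eqref{der2} makes $\gamma_2$ a homomorphism, and \eqref{der3}, \eqref{der4} supply the compatibilities with $\phi$ and $R$, respectively; bijectivity is immediate since both $\kappa_1$ and $\kappa_2$ take values in $K$ and $L$. The two constructions are mutually inverse by inspection on sections. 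Finally, the computation $\gamma_1 \gamma_1'(s_H(a) k) = s_H(a) \kappa_1(a) \kappa_1'(a) k$ (valid because $K$ is abelian) shows that the bijection $\gamma \mapsto (\kappa_1, \kappa_2)$ intertwines composition in $\Aut^{\mathcal{A}, \mathcal{K}}(\mathcal{H})$ with pointwise multiplication in $\Z^1_{RRB}(\mathcal{A}, \mathcal{K})$, and is therefore a group isomorphism.

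The main technical obstacle will be the unfolding of condition \eqref{der4} from the Rota--Baxter compatibility, since \eqref{relativecon} contains the inverse action $\nu^{-1}_{T(a)}$ applied to a product, so one must separate $S(\nu^{-1}_{T(a)}(\kappa_1(a)))$ from $S(\nu^{-1}_{T(a)}(k))$ using the facts that $S$ is a group homomorphism on the trivial relative Rota--Baxter group $\mathcal{K}$ and that $L$ is abelian. Verifying \eqref{der3} through the double use of \eqref{feqn} requires similar careful bookkeeping, but everything reduces to the module identities assembled in Definition \ref{moddefn}.
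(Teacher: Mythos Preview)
Your proposal is correct and follows essentially the same approach as the paper: both construct mutually inverse maps $\gamma \leftrightarrow (\kappa_1,\kappa_2)$ via $\kappa_1(a)=s_H(a)^{-1}\gamma_1(s_H(a))$, $\kappa_2(b)=s_G(b)^{-1}\gamma_2(s_G(b))$ and $\gamma_1(s_H(a)k)=s_H(a)\kappa_1(a)k$, $\gamma_2(s_G(b)l)=s_G(b)\kappa_2(b)l$, and then match the four conditions \eqref{der1}--\eqref{der4} with, respectively, the homomorphism property of $\gamma_1$, that of $\gamma_2$, the $\phi$-compatibility, and the $R$-compatibility. The only cosmetic difference is that the paper presents the map $\Z^1_{RRB}\to\Aut^{\mathcal{A},\mathcal{K}}(\mathcal{H})$ first and its inverse second, whereas you reverse the order.
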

\begin{proof}
	We know that every element $h\in H$ has a unique expression of the form $h=s_H(a)k$ for some $a\in A$ and $k\in K$. Similarly, every element $g\in G$ has a unique expression of the form $g=s_G(b)l$ for some $b\in B$ and $l\in L$. 	We define a map  $\eta: \Z^1_{RRB}(\mathcal{A},\mathcal{K})  \rightarrow  \Aut^{\mathcal{A},\mathcal{K}}(\mathcal{H})$ by 
	$$\eta(\kappa_1, \kappa_2) = (\kappa_1^\eta, \kappa_2^\eta),$$
where $(\kappa_1,\kappa_2)\in\Z^1_{RRB}(\mathcal{A},\mathcal{K})$, $\kappa_1^\eta(s_H(a)k)= s_H(a) \kappa_1(a)k$ and $\kappa_2^\eta( s_G(b)l)=s_G(b) \kappa_2(b)l.$  The image $\eta(\kappa_1,\kappa_2)$ is independent of the choice of a set-theoretic section. To see that the defined map $\eta$ is well-defined, we first need to show that $(\kappa_1^\eta, \kappa_2^\eta) \in \Aut^{\mathcal{A},\mathcal{K}}(\mathcal{H})$. For $a_1,a_2\in A$ and $k_1,k_2\in K,$ we have 
	\begin{eqnarray}
		\kappa_1^{\eta}(s_H(a_1)k_1~s_H(a_2)k_2)&=&	\kappa_1^{\eta}(s_H(a_1 a_2) \tau_1(a_1, a_2) \mu_{a_2}(k_1) k_2)\nonumber\\
		&=& s_H(a_1a_2) \kappa_1(a_1 a_2)\tau_1(a_1, a_2) \mu_{a_2}(k_1) k_2 \nonumber \\
		&=&  s_H(a_1a_2) \kappa_1(a_2) \mu_{a_2}(\kappa(a_1)) \tau_1(a_1, a_2) \mu_{a_2}(k_1) k_2, ~ \mbox{ using \eqref{der1}.} \label{aut1}
	\end{eqnarray}
On the other hand 
\begin{eqnarray}
\kappa_1^{\eta}(s_H(a_1)k_1) ~\kappa_1^{\eta}(s_H(a_2)k_2)&=& \big(s_H(a_1) \kappa_1(a_1)k_1 \big) \big(s_H(a_2) \kappa_1(a_2)k_2 \big) \nonumber\\
&=& s_H(a_1) s_H(a_2) \mu_{a_2}(\kappa_1(a_1)k_1 ) \kappa_1(a_2)k_2 \nonumber\\
&=& s_H(a_1 a_2) \tau_1(a_1, a_2) \mu_{a_2}(\kappa_1(a_1)k_1 ) \kappa_1(a_2)k_2.\label{aut2}
\end{eqnarray}
Since  $K$ is abelian, it is easy to see that the right-hand sides of \refeq{aut1} and \refeq{aut2} are the same. Hence, this shows that $\kappa_1^{\eta}$ is a group homomorphism. Additionally, it is easy to see that $\kappa_1^{\eta}$ is bijective, making it an automorphism of $H$. Similarly, we can show that $\kappa_2^{\eta}$ is an automorphism of $G$. 

Now, we need to show that $(\kappa_1^\eta, \kappa_2^\eta)$ is a morphism of relative Rota--Baxter groups, that is, 
$$ \kappa_2^{\eta}\; R=R \; \kappa_1^{\eta} \hspace{.2cm} \mbox{ and } \hspace{.2cm} \kappa_1^\eta\; \phi_g= \phi_{\kappa_2^\eta(g)} \;  \kappa_1^\eta. $$
For $a \in A$ and $k \in K,$ we have
\begin{eqnarray}
 \kappa_2^{\eta}\; (R \big(s_H(a)k)\big) &= & \kappa_2^{\eta}\big(s_G(T(a)) \chi(a) S(\nu^{-1}_{T(a)}(k))\big) \nonumber \\
 &=& s_G(T(a))  \kappa_2(T(a)) \chi(a) S(\nu^{-1}_{T(a)}(k)). \label{morcom1}
\end{eqnarray}
We also have 
\begin{eqnarray}
R \big( \kappa_1^{\eta}(s_H(a) k ) \big) &=& R \big( s_H(a) \kappa_1(a) k \big)\nonumber \\
&=& s_G(T(a)) \chi(a) S(\nu^{-1}_{T(a)}( \kappa_1(a) k )). \label{morcom2}
\end{eqnarray}
Using \eqref{der4}, we conclude that \eqref{morcom1} and \eqref{morcom2} are equal. Hence, $ \kappa_2^{\eta}\; R=R \; \kappa_1^{\eta}.$
\par
For $b \in B$ and $ l \in L$, we have 
\begin{eqnarray}
\kappa_1^\eta \big(\phi_{s_G(b) l} (s_H(a)k) \big) &=& \kappa_1^\eta \big( s_H(\beta_{b}(a)) \rho(a,b) \nu_b(f(l,a)k) \big)\nonumber\\
&=&  s_H(\beta_{b}(a)) \kappa_1(\beta_b(a)) \rho(a,b) \nu_b(f(l,a)k)  \label{actcom1}
\end{eqnarray}
and
\begin{eqnarray}
\phi_{\kappa_2^\eta(s_G(b) l)}  \big(\kappa_1^\eta(s_H(a) k)\big) &=& \phi_{(s_G(b)\kappa_2(b) l)} (s_H(a) \kappa_1(a) k) \nonumber \\
&=& s_H(\beta_{b}(a)) \rho(a,b) \nu_b(f(\kappa_2(b) l, a)  \kappa_1(a) k).  \label{actcom2}
\end{eqnarray}
By employing equation \eqref{der3} and Lemma \eqref{properties of f}, it follows that the right-hand sides of both \eqref{actcom1} and \eqref{actcom2} coincide. Consequently, we have  $\kappa_1^\eta \; \phi_g= \phi_{\kappa_2^\eta(g)} \; \kappa_1^\eta$. This  establishes the well-definedness of $\eta$. Furthermore, it is easy to see that $\eta$ is a group homomorphism.
\par 

We will now define a map $\zeta :  \Aut^{\mathcal{A},\mathcal{K}}(\mathcal{H})  \rightarrow \Z^1_{RRB}(\mathcal{A},\mathcal{K})$. Let $\gamma =(\gamma_1,\gamma_2)\in \Aut^{\mathcal{A},\mathcal{K}}(\mathcal{H})$. Then, for each  $a \in A$ and $b\in B$, there exists a unique element (say) $y^{\gamma_1}_a \in K$ and $y^{\gamma_2}_b \in L$ such that  $\gamma_1(s_H(a)) = s_H(a) y^{\gamma_1}_a$  and $\gamma_2(s_G(b)) = s_G(b) y^{\gamma_2}_b$. Thus, for $a\in A$ and $b\in B$, we can define $\zeta$ by

$$\zeta(\gamma_1, \gamma_{2}):=(\gamma_1^\zeta, \gamma^{\zeta}_{2}),$$

\noindent where $\gamma_1^\zeta(a)= y^{\gamma_1}_a \mbox{ and } \gamma^{\zeta}_{2}(b)= y^{\gamma_2}_b.$ To establish the well-definedness of $\zeta$, we must demonstrate that the pair $(\gamma_1^\zeta, \gamma_2^\zeta)$ satisfies the conditions \eqref{der1}, \eqref{der2}, \eqref{der3}, and \eqref{der4}. Conditions \eqref{der1} and \eqref{der2} are the standard requirements for group derivations, well-documented in existing group theory literature (see \cite{W71}). We verify that $(\gamma_1^\zeta, \gamma_2^\zeta)$ fulfills \eqref{der3} and \eqref{der4}. For $a \in A$ and $b \in B$, we have 
\begin{eqnarray*}
\gamma_1 \big( \phi_{s_G(b)} (s_H(a)) \big) &=& \gamma_1 ( s_H(\beta_b(a)) \rho(a,b) )\\
 \phi_{\gamma_2(s_G(b))} \big( \gamma_1(s_H(a)) \big)&=&  \gamma_1 ( s_H(\beta_b(a))) \rho(a,b) \\
 \phi_{s_G(b) y^{\gamma_2}_b}(s_H(a) y^{\gamma_1}_a)  &=&  s_H(\beta_b(a)) y^{\gamma_1}_{\beta_b(a)}   \rho(a,b)\\
s_H(\beta_b(a)) \rho(a,b) \nu_b(f(y^{\gamma_2}_b, a ) y^{\gamma_1}_a ) &=& s_H(\beta_b(a)) y^{\gamma_1}_{\beta_b(a)}   \rho(a,b)~\mbox{, by \eqref{nuact}}\\
\nu_b(f(y^{\gamma_2}_b, a ) y^{\gamma_1}_a ) &=& y^{\gamma_1}_{\beta_b(a)}.
\end{eqnarray*}
This shows that $(\gamma_1^\zeta, \gamma^{\zeta}_{2})$ satisfies \eqref{der3}. Next, we have
\begin{eqnarray*}
R  \big(\gamma_{1} (s_H(a)) \big) &=& \gamma_{2} \big(R(s_H(a)) \big)\\
R(s_H(a) y^{\gamma_1}_a) &=& \gamma_{2} \big(s_G(T(a))  \chi(a)  \big)\\
s_G(T(a)) \chi(a) S(\nu^{-1}_{T(a)}(y^{\gamma_1}_a)) &=& s_G(T(a))  \chi(a)  y^{\gamma_2}_{T(a)}\\
 S(\nu^{-1}_{T(a)}(y^{\gamma_1}_a))  &=& y^{\gamma_2}_{T(a)},
\end{eqnarray*}
which shows that $(\gamma_1^\zeta, \gamma_2^\zeta)$ satisfies  \eqref{der4}. It is easy to see that $\zeta :  \Aut^{\mathcal{A},\mathcal{K}}(\mathcal{H})  \rightarrow \Z^1_{RRB}(\mathcal{A},\mathcal{K})$ is a group homomorphism, and that $\eta$ and $\zeta$ are inverses of each other. This proves the theorem. 
\end{proof}
We finally get the following Wells-like exact sequence for relative Rota--Baxter groups.
\begin{thm}\label{wells7 RRB}
	Let $\mathcal{E}:	\textbf{1} \rightarrow \mathcal{K} \rightarrow \mathcal{H} \overset{(\pi_1,\pi_2)}\rightarrow \mathcal{A}\rightarrow 	\textbf{1}$ be an abelian extension of a relative Rota--Baxter group $\mathcal{A}$ by a trivial  relative Rota--Baxter group $\mathcal{K}$ such that $[\mathcal{E}] \in \Ext_{(\nu, \mu, \sigma,f)}(\mathcal{A},\mathcal{K})$. Then there is an exact sequence of groups 
	$$1 \rightarrow \Z^1_{RRB}(\mathcal{A},\mathcal{K}) \rightarrow \Aut_{\mathcal{K}}(\mathcal{H}) \stackrel{\rho(\mathcal{E})}{\longrightarrow} \C_{(\nu, \mu,\sigma,f)} \stackrel{\omega(\mathcal{E})}{\longrightarrow} \Ho^2_{RRB}(\mathcal{A},\mathcal{K}),$$
	where $\omega(\mathcal{E})$ is, in general,  only a derivation. 
\end{thm}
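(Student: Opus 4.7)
The plan is to deduce the sequence by splicing together the two structural results that have just been proved. Theorem \ref{wells5 RRB} already gives the exact sequence
$$1 \rightarrow \Aut^{\mathcal{A},\mathcal{K}}(\mathcal{H}) \xrightarrow{\iota} \Aut_{\mathcal{K}}(\mathcal{H}) \stackrel{\rho(\mathcal{E})}{\longrightarrow} \C_{(\nu, \mu,\sigma,f)} \stackrel{\omega(\mathcal{E})}{\longrightarrow} \Ho^2_{RRB}(\mathcal{A},\mathcal{K}),$$
where $\iota$ is the inclusion; while Theorem \ref{wells6 RRB} provides a group isomorphism $\zeta \colon \Aut^{\mathcal{A},\mathcal{K}}(\mathcal{H}) \xrightarrow{\cong} \Z^1_{RRB}(\mathcal{A},\mathcal{K})$ with inverse $\eta$. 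The strategy is simply to precompose the first non-trivial arrow of the former sequence with $\eta$, replacing $\Aut^{\mathcal{A},\mathcal{K}}(\mathcal{H})$ by $\Z^1_{RRB}(\mathcal{A},\mathcal{K})$, and to check that exactness is preserved at every term.

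First, I would define the map $\Z^1_{RRB}(\mathcal{A},\mathcal{K}) \to \Aut_{\mathcal{K}}(\mathcal{H})$ to be the composition $\iota \circ \eta$. Since $\eta$ is a group isomorphism (by Theorem \ref{wells6 RRB}) and $\iota$ is an injective group homomorphism, the composition $\iota \circ \eta$ is an injective homomorphism of groups, which handles the exactness at $\Z^1_{RRB}(\mathcal{A},\mathcal{K})$.

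Next, I would verify exactness at $\Aut_{\mathcal{K}}(\mathcal{H})$. Because $\eta$ is a bijection, $\IM(\iota \circ \eta) = \IM(\iota) = \Aut^{\mathcal{A},\mathcal{K}}(\mathcal{H})$, and by Theorem \ref{wells5 RRB} the latter coincides with $\Ker(\rho(\mathcal{E}))$. Exactness at $\C_{(\nu,\mu,\sigma,f)}$, namely $\IM(\rho(\mathcal{E})) = \Ker(\omega(\mathcal{E}))$, is exactly the content of Proposition \ref{wells4 RRB}. The fact that $\omega(\mathcal{E})$ remains only a derivation, and not necessarily a homomorphism, is part of Lemma \ref{wells3 RRB} and carries over unchanged.

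There is no substantive obstacle in this argument: all the heavy lifting (construction of $\omega(\mathcal{E})$ as a derivation, identification of its kernel with the image of $\rho(\mathcal{E})$, and the module-theoretic interpretation of the kernel of $\rho(\mathcal{E})$ as $\Z^1_{RRB}$) is already completed in Lemma \ref{wells3 RRB}, Proposition \ref{wells4 RRB}, and Theorem \ref{wells6 RRB}. The only thing that could go subtly wrong would be if $\eta$ were not a genuine group homomorphism, but this was verified during the proof of Theorem \ref{wells6 RRB}, so the concatenation yields the desired exact sequence immediately.
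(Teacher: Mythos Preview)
Your proposal is correct and matches the paper's approach exactly: the paper presents Theorem~\ref{wells7 RRB} as an immediate consequence of Theorem~\ref{wells5 RRB} together with the isomorphism $\Aut^{\mathcal{A},\mathcal{K}}(\mathcal{H}) \cong \Z^1_{RRB}(\mathcal{A},\mathcal{K})$ of Theorem~\ref{wells6 RRB}, without supplying any further argument. Your explicit verification that exactness is preserved under replacing $\Aut^{\mathcal{A},\mathcal{K}}(\mathcal{H})$ by $\Z^1_{RRB}(\mathcal{A},\mathcal{K})$ via the isomorphism $\eta$ is precisely what is implicitly intended.
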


A pair of  automorphisms $(\psi, \theta) \in \Aut(\mathcal{A}) \times \Aut(\mathcal{K})$ is said to be \emph{inducible} if $(\psi, \theta) \in \im(\rho(\mathcal{E}))$. As a straightforward application of Theorem \ref{wells7 RRB}, we get
\begin{cor}
	Let $\mathcal{E}: 	\textbf{1} \rightarrow \mathcal{K} \rightarrow \mathcal{H} \overset{(\pi_1,\pi_2)}\rightarrow \mathcal{A}\rightarrow 	\textbf{1}$ be an abelian extension of a relative Rota--Baxter goup $\mathcal{A}$ by a trivial  relative Rota--Baxter group $\mathcal{K}$ such that $[\mathcal{E}] \in \Ext_{(\nu, \mu, \sigma,f)}(\mathcal{A},\mathcal{K})$. If  $\Ho^2_{RRB}(\mathcal{A},\mathcal{K})$ is the trivial group, then  every element of $\C_{(\nu, \mu,\sigma,f)}$ is inducible. 
\end{cor}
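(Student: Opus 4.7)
The plan is to derive this as an immediate consequence of the exactness of the sequence in Theorem \ref{wells7 RRB}, together with the definitions of $\omega(\mathcal{E})$ and inducibility. The key observation is that if the target group $\Ho^2_{RRB}(\mathcal{A},\mathcal{K})$ is trivial, then the derivation $\omega(\mathcal{E})$ must be the constant map sending every element of $\C_{(\nu,\mu,\sigma,f)}$ to the identity class.

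First, I would unpack what this means using the definition of $\omega(\mathcal{E})$: for each $c \in \C_{(\nu,\mu,\sigma,f)}$, the element $\omega(\mathcal{E})(c) = h_c$ is the unique element of $\Ho^2_{RRB}(\mathcal{A},\mathcal{K})$ satisfying $[\mathcal{E}]^c = [\mathcal{E}]^{h_c}$. When $\Ho^2_{RRB}(\mathcal{A},\mathcal{K})$ is trivial, the only available $h_c$ is the identity, and hence $[\mathcal{E}]^c = [\mathcal{E}]$ for every $c \in \C_{(\nu,\mu,\sigma,f)}$. In the set-theoretic sense defined in the excerpt, this yields
\[ \Ker(\omega(\mathcal{E})) = \C_{(\nu,\mu,\sigma,f)}. \]

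Next, I would apply Proposition \ref{wells4 RRB}, which identifies $\IM(\rho(\mathcal{E}))$ with $\Ker(\omega(\mathcal{E}))$. Combining this with the displayed equality gives $\IM(\rho(\mathcal{E})) = \C_{(\nu,\mu,\sigma,f)}$. By the very definition of inducibility, an element $(\psi,\theta) \in \Aut(\mathcal{A}) \times \Aut(\mathcal{K})$ lies in $\IM(\rho(\mathcal{E}))$ precisely when it is inducible, so every element of $\C_{(\nu,\mu,\sigma,f)}$ is inducible, which is the claim.

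There is essentially no obstacle here, as the statement is a direct corollary; the only minor subtlety worth a brief remark is that $\omega(\mathcal{E})$ is only a derivation (not a homomorphism), so the argument must use the set-theoretic kernel rather than invoke exactness of a sequence of group homomorphisms in the strict sense. Both Proposition \ref{wells4 RRB} and the construction in \eqref{wells-map RRB} are already phrased in the correct set-theoretic terms, so no additional care is needed.
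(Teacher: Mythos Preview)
Your proposal is correct and matches the paper's approach: the paper states the corollary as a ``straightforward application'' of Theorem~\ref{wells7 RRB} without giving any further argument, and your write-up simply unpacks that application via Proposition~\ref{wells4 RRB} and the definition of $\omega(\mathcal{E})$.
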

Let $\mathcal{E}: 	\textbf{1} \rightarrow \mathcal{K} \rightarrow \mathcal{H} \overset{(\pi_1,\pi_2)}\rightarrow \mathcal{A}\rightarrow 	\textbf{1}$ be an abelian extension of a relative Rota--Baxter group $\mathcal{A}$ by a trivial relative Rota--Baxter group $\mathcal{K}$. Then $\mathcal{K}$ can be viewed as an $\mathcal{A}$-module through the corresponding quadruplet of actions $(\nu, \mu, \sigma,f)$ as defined before.  An automorphism $\psi=(\psi_1,\psi_2)$ of the relative Rota--Baxter group $\mathcal{A}$ defines a new $\mathcal{A}$-module structure on $\mathcal{K}$ given by $(\nu \psi_2, \mu \psi_1 ,\sigma \psi_2,f\psi_1)$, which we denote by $\mathcal{K}_{\psi}$, where 
\begin{eqnarray*}
\nu \psi_2(b) &=& \nu_{\psi_2(b)},\\
 \mu \psi_1(a) (k)   &=&  s_H(\psi_1(a))^{-1}  k  s_H(\psi_1(a)),\\
  \sigma \psi_2(b)(l)  &=& s_G(\psi_2(b))^{-1}ls_G(\psi_2(b)),\\
  f\psi_1(l,a)  &=& f(l,\psi_1(a)),
\end{eqnarray*}  
   for $a \in A$, $b\in B$, $k \in K$ and $l\in L$. It is not difficult to show that the automorphism $\psi$ induces an isomorphism $\psi^*$ of cohomology groups
$\psi^* : \Ho^2_{RRB}(\mathcal{A},\mathcal{K}) \rightarrow \Ho^2_{RRB}(\mathcal{A},\mathcal{K}_\psi)$
defined by
$$\phi^*([(\tau_1,\tau_2,\rho,\chi)])= [\big(\tau_1^{(\psi, \Id)}, \tau_2^{(\psi, \Id)},\rho^{(\psi, \Id)},\chi^{(\psi, \Id)}\big)].$$
Further, any $\mathcal{A}$-module isomorphism  $\theta=(\theta_1,\theta_2) : \mathcal{K} \rightarrow \mathcal{K}_\psi$ induces an isomorphism $\theta^*$ of cohomology groups
$\theta^* : \Ho^2_{RRB}(\mathcal{A},\mathcal{K}) \rightarrow \Ho^2_{RRB}(\mathcal{A},\mathcal{K}_\psi)$
given by
$$\theta^*([(\tau_1,\tau_2,\rho,\chi)]) = [\big(\tau_1^{(\Id, \theta^{-1})}, \tau_2^{(\Id, \theta^{-1})},\rho^{(\Id, \theta^{-1})},\chi^{(\Id, \theta^{-1})}\big)],$$
where $\theta^{-1}=(\theta_1^{-1},\theta_2^{-1})$. With this framework, we have the following result.

\begin{thm}\label{moduletheory}
	A pair of automorphisms $(\psi, \theta) \in \Aut(\mathcal{A}) \times \Aut(\mathcal{K})$  is inducible if and only if the following conditions hold:
\begin{enumerate}
\item   $\theta=(\theta_1,\theta_2) : \mathcal{K} \rightarrow \mathcal{K}_\psi $ is an isomorphism of $\mathcal{A}$-modules.
\item $\theta^*([(\tau_1,\tau_2,\rho,\chi)])= \psi^*([(\tau_1,\tau_2,\rho,\chi)])$.
\end{enumerate}
\end{thm}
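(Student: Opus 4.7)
The plan is to leverage the Wells-like exact sequence established in Theorem \ref{wells7 RRB}: a pair $(\psi, \theta)$ is inducible precisely when it lies in $\IM(\rho(\mathcal{E})) = \Ker(\omega(\mathcal{E}))$, which first requires membership in $\C_{(\nu, \mu, \sigma, f)}$ and second requires that $\omega(\mathcal{E})(\psi, \theta)$ be trivial in $\Ho^2_{RRB}(\mathcal{A},\mathcal{K})$. I will translate these two requirements into conditions (1) and (2) respectively.

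First, I would observe that the four defining relations of $\C_{(\nu, \mu, \sigma, f)}$ in \eqref{act stb}, namely $\nu_b = \theta_1^{-1} \nu_{\psi_2(b)} \theta_1$, $\mu_a = \theta_1^{-1} \mu_{\psi_1(a)} \theta_1$, $\sigma_b = \theta_2^{-1} \sigma_{\psi_2(b)} \theta_2$, and $\theta_1(f(l,a)) = f(\theta_2(l), \psi_1(a))$, are precisely the assertions that $\theta = (\theta_1, \theta_2)\colon \mathcal{K} \to \mathcal{K}_\psi$ preserves the left $B$-action, the right $A$-action, the right $B$-action on $L$, and the structure map $f$. Hence condition (1) is equivalent to $(\psi, \theta) \in \C_{(\nu, \mu, \sigma, f)}$.

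Next, assuming condition (1), let $(\tau_1, \tau_2, \rho, \chi) \in \Z^2_{RRB}(\mathcal{A},\mathcal{K})$ be a 2-cocycle representing $[\mathcal{E}]$ under the bijection of Theorem \ref{ext and cohom bijection}. By construction of $\omega(\mathcal{E})$, the class $\omega(\mathcal{E})(\psi, \theta)$ is characterized by $[\mathcal{E}]^{(\psi, \theta)} = [\mathcal{E}]^{\omega(\mathcal{E})(\psi, \theta)}$, and under the bijection $[\mathcal{E}]^{(\psi, \theta)}$ corresponds to $[(\tau_1^{(\psi, \theta)}, \tau_2^{(\psi, \theta)}, \rho^{(\psi, \theta)}, \chi^{(\psi, \theta)})]$. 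Thus $\omega(\mathcal{E})(\psi, \theta)$ vanishes if and only if this class equals $[(\tau_1, \tau_2, \rho, \chi)]$ in $\Ho^2_{RRB}(\mathcal{A},\mathcal{K})$. The bridge to condition (2) is the cochain-level identity that $\theta^*$ carries the twisted cochain to $\psi^*$ of the original: for instance, $\theta^*(\tau_1^{(\psi, \theta)})(a_1, a_2) = \theta_1(\theta_1^{-1}(\tau_1(\psi_1(a_1), \psi_1(a_2)))) = \psi^*(\tau_1)(a_1, a_2)$, and analogous trivial cancellations hold for $\tau_2^{(\psi, \theta)}, \rho^{(\psi, \theta)}, \chi^{(\psi, \theta)}$. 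Since condition (1) guarantees that $\theta^*\colon \Ho^2_{RRB}(\mathcal{A},\mathcal{K}) \to \Ho^2_{RRB}(\mathcal{A},\mathcal{K}_\psi)$ is an isomorphism, the equality $[(\tau_1^{(\psi, \theta)}, \ldots)] = [(\tau_1, \ldots)]$ in $\Ho^2_{RRB}(\mathcal{A},\mathcal{K})$ is equivalent, after applying $\theta^*$, to $\psi^*([(\tau_1, \ldots)]) = \theta^*([(\tau_1, \ldots)])$ in $\Ho^2_{RRB}(\mathcal{A},\mathcal{K}_\psi)$, which is precisely condition (2).

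The main obstacle will be the bookkeeping required to rigorously verify the cochain-level identity $\theta^* \circ (-)^{(\psi, \theta)} = \psi^*$ on each of the four components of a 2-cocycle, together with confirming that $\theta^*$ and $\psi^*$ are well-defined on cohomology (i.e., that they carry $\Z^2_{RRB}(\mathcal{A},\mathcal{K})$ to $\Z^2_{RRB}(\mathcal{A},\mathcal{K}_\psi)$ and respect coboundaries). Although Lemma \ref{lemma-act3 RRB} already does essentially the needed computations for general $(\psi, \theta) \in \C_{(\nu, \mu, \sigma, f)}$, one still has to separately read off the statements for the limiting cases $(\psi, \Id)$ and $(\Id, \theta^{-1})$ that define $\psi^*$ and $\theta^*$; this is routine but involves unwinding several layers of definitions.
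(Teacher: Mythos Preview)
Your approach is correct and takes a genuinely different route from the paper. The paper argues both directions directly at the level of elements: for the forward implication it starts from $\gamma\in\Aut_{\mathcal{K}}(\mathcal{H})$ and, writing $\gamma_1(s_H(a))=s_H(\gamma_{1_A}(a))\kappa_1(a)$ and $\gamma_2(s_G(b))=s_G(\gamma_{2_B}(b))\kappa_2(b)$, computes explicitly (using \eqref{feqn}, \eqref{relativecon}, Lemma~\ref{properties of f}) the four module-compatibility identities for condition~(1) and the four relations \eqref{coholg1}--\eqref{coholg4} showing that $(\tau_1^{(\psi,\Id)},\ldots)(\tau_1^{(\Id,\theta^{-1})},\ldots)^{-1}$ equals $\partial^1_{RRB}(\kappa_1,\kappa_2)$; for the converse it reverse-engineers $\gamma$ from $(\kappa_1,\kappa_2)$ by the explicit formula $\gamma_1(s_H(a)k)=s_H(\psi_1(a))\kappa_1(a)\theta_1(k)$ and checks by hand that this lies in $\Aut_{\mathcal{K}}(\mathcal{H})$.

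Your argument instead packages everything through Proposition~\ref{wells4 RRB} and the remark following Lemma~\ref{lemma-act3 RRB}: inducibility is $\Ker(\omega(\mathcal{E}))$, condition~(1) is membership in $\C_{(\nu,\mu,\sigma,f)}$, and condition~(2) is $[\mathcal{E}]^{(\psi,\theta)}=[\mathcal{E}]$ transported through the isomorphism $\theta^*$ via the cochain identity $\theta^*\circ(-)^{(\psi,\theta)}=\psi^*$. This is shorter and explains conceptually why the two conditions appear, at the cost of relying on the (stated but not fully proved) compatibility of the two actions on $\Ext_{(\nu,\mu,\sigma,f)}(\mathcal{A},\mathcal{K})$ in that remark; the paper's direct computation is longer but yields the explicit lift $\gamma$ and is independent of the Wells sequence itself.
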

\begin{proof}
	Let  $(\psi, \theta)$ be an inducible pair. Then there exists an automorphism $\gamma \in \Aut_{\mathcal{K}}(\mathcal{H})$ such that $(\psi, \theta) = (\gamma_{\mathcal{A}}, \gamma_{\mathcal{K}})$.   For $a \in A $, $b\in B$, $k \in K$ and $l\in L$, we have
	\begin{eqnarray}\label{modeqn1RRB}
		\gamma_1|_K(\nu_b(k))& =&\gamma_1|_K(\phi_{s_G(b)}(k))\nonumber\\
		&=&\phi_{\gamma_2(s_G(b))}(\gamma_1(k))\text{, since }(\gamma_1,\gamma_2)\in\Aut(\mathcal{H}) \nonumber\\
		&=&\phi_{s_G(\pi_2(\gamma_2(s_G(b))))}(\gamma_1(k)) \nonumber\\
		&=& \nu_{\gamma_{2_B}(b)}(\gamma_1|_K(k)), 	\text{ using that }\phi_{s_G(\pi_2(g))}=\phi_g\text{ for all }g\in G.			
	\end{eqnarray}
	Similarly, we have,
	\begin{eqnarray}
		\gamma_2|_L(\sigma_b(l))& = & \sigma_{\gamma_{2_B}(b)}(\gamma_2|_L(l)), \label{modeqn1RRBa }\\
			\gamma_1|_K(\mu_a(k))& =& \mu_{\gamma_{1_A}(a)}(\gamma_1|_K(k)), \label{modeqn1RRBb}\\
				\gamma_1|_K(f(l,a)) & = & f(\gamma_2|_L(l),\gamma_{1_A}(a))\label{modeqn1RRBc}.
	\end{eqnarray}

	This shows that  $((\Id,\Id), \gamma_{\mathcal{K}}) $ is compatible with the pairs of actions $(\nu, \mu, \sigma,f)$ and $(\nu \gamma_{2_B}, \mu \gamma_{1_A},\sigma \gamma_{2_B},f\gamma_{1_A})$, and hence condition (1) holds.
\par
	For each $a \in A$, there exists a unique element (say) $\kappa_1(a)$ in $K$ such that  $ \gamma_1(s_H(a)) =s_H(\gamma_{1_A}(a))  \kappa_1(a)$. Similarly, for each $b \in B$, there exists a unique element (say) $\kappa_2(b)$ in $L$ such that  $ \gamma_2(s_G(b)) =s_G(\gamma_{2_B}(b))  \kappa_2(b)$. It is clear that, for the maps $\kappa_1 : A \rightarrow K$ given by $a \mapsto \kappa_1(a)$ and $\kappa_2 : B \rightarrow L$ given by $b \mapsto \kappa_2(b)$, the pair $(\kappa_1,\kappa_2)$ lies in $C_{RRB}^1$. Given elements $h_1,h_2 \in H$, they have unique expressions of the form $h_1= s_H(a_1)  k_1$ and $h_2 = s_H(a_2)  k_2$ for some $a_1, a_2 \in A$ and $k_1, k_2 \in K$. Then, we have
	\begin{eqnarray*}
		\gamma_1(h_1h_2) &=& \gamma_1\big(s_H(a_1a_2)\mu_{a_2}(k_1)k_2\tau_1(a_1,a_2)\big)\\
		&=&\gamma_1(s_H(a_1a_2)) \gamma_1|_K\big(\mu_{a_2}(k_1)k_2\tau_1(a_1,a_2)\big)\\ 
		&=&s_H(\gamma_{1_A}(a_1)\gamma_{1_A}(a_2))\kappa_1(a_1a_2) \gamma_1|_K(\mu_{a_2}(k_1)k_2)\gamma_1|_K(\tau_1(a_1,a_2))
	\end{eqnarray*}
and
	\begin{eqnarray} 
		\gamma_1(h_1)\gamma_1(h_2)&=&s_H(\gamma_{1_A}(a_1))\kappa_1(a_1)\gamma_1|_K(k_1)s_H(\gamma_{1_A}(a_2))\kappa_1(a_2)\gamma_1|_K(k_2) \nonumber\\
		&=& s_H(\gamma_{1_A}(a_1))s_H(\gamma_{1_A}(a_2))\mu_{\gamma_{1_A}(a_2)}(\kappa_1(a_1)) \mu_{\gamma_{1_A}(a_2)}(\gamma_1|_K(k_1)) \kappa_1(a_2)  \gamma_1|_K(k_2) \nonumber\\
		&=& s_H(\gamma_{1_A}(a_1)\gamma_{1_A}(a_2))\tau_1(\gamma_{1_A}(a_1), \gamma_{1_A}(a_2))\mu_{\gamma_{1_A}(a_2)}(\kappa_1(a_1)) \nonumber\\
		&&  \; \mu_{\gamma_{1_A}(a_2)}(\gamma_1|_K(k_1)) \kappa(a_2)  \gamma_1|_K(k_2).\nonumber
	\end{eqnarray}
	Since $\gamma_1(h_1h_2) = \gamma_1(h_1)\gamma_1(h_2)$, the preceding two equations and \eqref{modeqn1RRBb}, give
	\begin{equation}\label{coholg1}
		\gamma_1|_K(\tau_1(a_1, a_2))(\tau_1(\gamma_{1_A}(a_1), \gamma_{1_A}(a_2)))^{-1}=\mu_{\gamma_{1_A}(a_2)}(\kappa_1(a_1))(\kappa_1(a_1a_2))^{-1}\kappa_1(a_2).
	\end{equation}
Similarly,	given elements $g_1,g_2 \in G$, they have unique expressions of the form $g_1= s_G(b_1)  l_1$ and $g_2 = s_G(b_2)  l_2$ for some $b_1, b_2 \in B$ and $l_1, l_2 \in L$.  Now, by similar computations, we get
	\begin{equation}\label{coholg2}
		\gamma_2|_L(\tau_2(b_1, b_2))(\tau_2(\gamma_{2_B}(b_1), \gamma_{2_B}(b_2)))^{-1}=\sigma_{\gamma_{2_B}(b_2)}(\kappa_2(b_1))(\kappa_2(b_1b_2))^{-1}\kappa_2(b_2).
	\end{equation}
Given an element $h\in H$, it can be uniquely written as $h=s_H(a)k$ for some $k\in K$. Then, we have
	\begin{eqnarray*}
		\gamma_2(R(h))&=&\gamma_2(R(s_H(a)k))\\
		&=&\gamma_2\big(s_G(T(a))\chi(a)S(\nu^{-1}_{T(a)}(k))\big)\mbox{, using Lemma \eqref{f rho chi eqn}}\\
		&=&\gamma_2(s_G(T(a)))\gamma_2|_L(\chi(a))\gamma_2|_L(S(\nu^{-1}_{T(a)}(k)))\\
		&=&s_G(\gamma_{2_B}(T(a)))\kappa_2(T(a))\gamma_2|_L(\chi(a))\gamma_2|_L(S(\nu^{-1}_{T(a)}(k))
	\end{eqnarray*}
	and
	\begin{eqnarray*}
		R(\gamma_1(h))&=&R(\gamma_1(s_H(a)k))\\
		&=&R(\gamma_1(s_H(a))\gamma_1(k))\\
		&=&R(s_H(\gamma_{1_A}(a))\kappa_1(a)\gamma_1|_K(k))\\
		&=&s_G(T(\gamma_{1_A}(a)))\chi(\gamma_{1_A}(a))S(\nu^{-1}_{T(\gamma_{1_A}(a))}(\kappa_1(a)\gamma_1|_K(k))).\\
	\end{eqnarray*}
	Since $\gamma_2(R(h))=R(\gamma_1(h))$, we have
	\begin{equation}\label{coholg3}
		\gamma_2|_L(\chi(a))(\chi(\gamma_{1_A}(a)))^{-1}=S\big(\nu^{-1}_{T(\gamma_{1_A}(a))}(\kappa_1(a))\big)\big(\kappa_2(T(a))\big)^{-1},
	\end{equation}
	as
\begin{eqnarray*}
	S(\nu^{-1}_{T(\gamma_{1_A}(a))}(\gamma_1|_K(k)))&=&	S(\nu^{-1}_{\gamma_{2_B}(T(a))}(\gamma_1|_K(k)))\\
	&=&S(\gamma_1|_K(\nu^{-1}_{T(a)}(k)))\\
	&=&\gamma_2|_L(S(\nu^{-1}_{T(a)}(k))).
\end{eqnarray*}
	Given $h\in H$ and $g\in G$, they can be uniquely written as $h=s_H(a)k$ and $g=s_G(b)l$ for some $a\in A, b\in B, k\in K$ and $l\in L$. Then, we have
	\begin{eqnarray*}
		\gamma_1\phi_g(h)&=&\gamma_1\big(s_H(\beta_b(a))\rho(a,b)\nu_b(f(l,a)k)\big)\mbox{, using Lemma \eqref{f rho chi eqn}}\\
		&=&\gamma_1\big(s_H(\beta_b(a))\big)\gamma_1|_K(\rho(a,b))\gamma_1|_K\big(\nu_b(f(l,a)k)\big)\\
		&=&s_H\big(\gamma_{1_A}(\beta_b(a))\big)\kappa_1(\beta_b(a))\gamma_1|_K(\rho(a,b))\gamma_1|_K\big(\nu_b(f(l,a)k)\big)
	\end{eqnarray*}
and
	\begin{eqnarray*}
	\phi_{\gamma_2(g)}\gamma_1(h)&=&\phi_{\gamma_2(s_G(b)l)}(\gamma_1(s_H(a)k))\\
	&=&\phi_{\gamma_2(s_G(b))\gamma_2|_L(l)}(\gamma_1(s_H(a))\gamma_1|_K(k))\\
	&=&\phi_{s_G(\gamma_{2_B}(b))\kappa_2(b)\gamma_2|_L(l)}(s_H(\gamma_{1_A}(a))\kappa_1(a)\gamma_1|_K(k))\\
	&=&s_H\big(\beta_{\gamma_{2_B}(b)}(\gamma_{1_A}(a))\big)\rho(\gamma_{1_A}(a),\gamma_{2_B}(b))\nu_{\gamma_{2_B}(b)}\big(f(\kappa_2(b)\gamma_2|_L(l),\gamma_{1_A}(a))\kappa_1(a)\gamma_1|_K(k)\big),\\
	&&\mbox{using Lemma \eqref{f rho chi eqn}.}
	\end{eqnarray*}
	Notice that,
	\begin{equation}\label{weq1}
	s_H\big(\gamma_{1_A}(\beta_b(a))\big)=s_H\big(\beta_{\gamma_{2_B}(b)}(\gamma_{1_A}(a))\big),
	\end{equation}
	since $(\gamma_{1_A},\gamma_{2_B})$ is an automorphism of $\mathcal{A}$. Using \eqref{modeqn1RRBc}, \eqref{weq1} and Lemma \eqref{properties of f}, we get
	\begin{equation}\label{coholg4}
		\gamma_1(\rho(a,b))\big(\rho(\gamma_{1_A}(a),\gamma_{2_B}(b))\big)^{-1}=\nu_{\gamma_{2_B}(b)}\big(f(\kappa_2(b),\gamma_{1_A}(a))\kappa_1(a)\big)\big(\kappa_1(\beta_b(a))\big)^{-1}.
	\end{equation}
Thus, \eqref{coholg1}, \eqref{coholg2}, \eqref{coholg3} and \eqref{coholg4} prove condition (2), that is, $\theta^*([(\tau_1,\tau_2,\rho,\chi)])= \psi^*([(\tau_1,\tau_2,\rho,\chi)])$.
\par

Conversely, let $(\psi,\theta) \in \Aut(\mathcal{A}) \times \Aut(\mathcal{K})$  satisfy conditions (1) and (2). Condition (2) guarantees the existence of maps $\kappa_1 : A \rightarrow K$ and $\kappa_2 : B \rightarrow L$ such that $(\kappa_1,\kappa_2)\in C^1_{RRB} $ and  
$$(\tau_1^{(\psi,\Id)}, \tau_2^{(\psi,\Id)}, \rho^{(\psi,\Id)}, \chi^{(\psi,\Id)} ) \,(\tau^{(\Id,\theta^{-1})}_1, \tau^{(\Id,\theta^{-1})}_2, \rho^{(\Id,\theta^{-1})}, \chi^{(\Id,\theta^{-1})} )^{-1}=\partial^1_{RRB}(\kappa_1, \kappa_2).$$
Define the maps $\gamma_1:H\rightarrow H$ by
$$\gamma_1(s_H(a)k)=s_H(\psi_1(a))\kappa_1(a)\theta_1(k)$$
$\text{and }\gamma_2:G\rightarrow G$ by
$$\gamma_2(s_G(b)l)=s_G(\psi_2(b))\kappa_2(b)\theta_2(l)$$
for all $a\in A$, $b \in B$, $k\in K$ and $l \in L$. It is easy to see that $\gamma=(\gamma_1,\gamma_2)\in \Aut_{\mathcal{K}}(\mathcal{H})$ and  $\gamma_{\mathcal{A}}=\psi,$  $\gamma_{\mathcal{K}}=\theta$. This completes the proof of the theorem.
\end{proof}

\begin{ack}
	{\rm Pragya Belwal thanks UGC for the PhD research fellowship and Nishant Rathee thanks IISER Mohali for the institute post doctoral fellowship. The authors express their gratitude to Professor Mahender Singh for his valuable and  insightful comments. }
\end{ack}
\medskip
\section{Declaration}
The authors declare that they have no conflicts of interest.
\medskip

\end{document}